\newcommand{\N}{\mathbb N}
\newcommand{\R}{\mathbb R}
\newcommand{\1}{\mathbf{1}}
\newcommand{\bj}{\mathbf{j}}
\newcommand{\ie}{\textsl{i.e.}}
\newcommand{\mbf}[1]{\mathbf{#1}}
\newcommand{\rk}{\mathrm{rank}}
\newcommand{\ad}{\mathrm{ad}}
\newcommand{\ddu}[2]{\frac{\partial}{\partial u_{#1}^{(#2)}}}
\newcommand{\ddxx}[1]{\frac{\partial}{\partial x_{#1}}}
\newcommand{\ol}[1]{\overline{#1}}
\newtheorem{thm}{Theorem}
\newtheorem{definition}{Definition}
\begin{document}

\markboth{Jean L\'evine and Jaume Franch}{Jean L\'evine and Jaume Franch}
%{Instructions for typing manuscripts (Paper's Title)}

%%%%%%%%%%%%%%%%%%%%% Publisher's Area please ignore %%%%%%%%%%%%%%%
%
%\catchline{}{}{}{}{}
%
%%%%%%%%%%%%%%%%%%%%%%%%%%%%%%%%%%%%%%%%%%%%%%%%%%%%%%%%%%%%%%%%%%%%

\title{On Driftless Systems with $\mathbf{m}$ controls and $\mathbf{2m}$ or $\mathbf{2m-1}$ states that are Flat by Pure Prolongation}

\author{Jean L\'evine \footnote{Centre Automatique et Syst\`emes (CAS), MINES Paris - PSL\\Paris, 75006, France\\email:jean.levine@minesparis.psl.eu},
Jaume Franch \footnote{Department of Mathematics, Universitat Polit\`ecnica de Catalunya-BarcelonaTech \\
Barcelona, 08036, Spain\\
email:jaume.franch@upc.edu}}

%\begin{history}
%\received{(Day Month Year)}
%\revised{(Day Month Year)}
%\accepted{(Day Month Year)}
%\published{(Day Month Year)}
%\comby{(Handling Editor)} % Communicated by optional
%\end{history}

\maketitle

\begin{abstract}
It is widely recognized that no tractable necessary and sufficient conditions exist for determining whether a system is, in general, differentially flat. However, specific cases do provide such conditions. For instance, driftless systems with two inputs have known necessary and sufficient conditions. For driftless systems with three or more inputs, the available conditions are only sufficient.

This paper presents new findings on determining whether a system with $m$ inputs and $2m$ or $2m-1$ states is flat by pure prolongation, a specific subclass of differential flatness. While this condition is more restrictive than general differential flatness, the algorithm for computing flat outputs remains remarkably simple, and the verification requirements are relatively lenient. Moreover, the conditions proposed in this work broaden the class of systems recognized as differentially flat, as our sufficient condition differs from existing criteria.
\end{abstract}

\section{Introduction}
\label{introduction}

For linear control systems, controllability, transformation into Brunovsky canonical form through a change of variables and a regular feedback, and controller design for trajectory planning are interchangeable concepts. However, this equivalence no longer holds for nonlinear systems. While a nonlinear system may exhibit controllability, it cannot necessarily be transformed into Brunovsky canonical form.

To address this disparity, new criteria were established to ascertain a nonlinear system's capability to achieve its Brunovsky canonical form. These criteria encompass static feedback linearizable systems, as detailed in references such as (\cite{HSM}, \cite{JR}, \cite{Isidori}).

Subsequently,  dynamically feedback linearizable systems were introduced in works like (\cite{CLM89}) and (\cite{CLM91}).
Such systems also present the possibility of transformation into Brunovsky form by augmenting the system with additional state variables, each possessing its own dynamics and through the application of a diffeomorphism and a regular state feedback to the overall system.
Unfortunately, there lacks a definitive necessary and sufficient condition for determining whether a given system is dynamically feedback linearizable.

Significant advances were achieved in the final decade of the previous century with the introduction of differentially flat systems (\cite{FLMR1995}). Notably, for driftless systems, more sophisticated outcomes emerged for two-input systems (\cite{MR94}) and systems of codimension two (\cite{MR95}), where both necessary and sufficient conditions were given for a system to exhibit differential flatness. For systems with more than two inputs, a sufficient condition was also given in (\cite{MR94}). The book (\cite{Levine2009}) summarizes all the mentioned results and give algorithms for the computation of the flat outputs. More recently, algorithms to compute the flat outputs for driftless systems with two inputs were given in (\cite{LR}, \cite{Schoberl2023}).

A specific category of dynamic feedback are pure prolongation, wherein certain inputs and their derivatives up to a specified order are introduced as new states. In the context of this dynamic feedback type, both necessary and sufficient conditions for the linearization of a control system through prolongations have been presented in (\cite{tesi}) and (\cite{EJC2005}). In particular, a bound on the number of integrators needed to linearize a control system was given.
In a recent development, an algorithm for determining the possibility to linearize system through pure prolongations, has been presented in (\cite{Levine2023}). The algorithm, designed for computational feasibility, efficiently identifies the minimal prolongation in a finite number of steps, relying solely on Lie brackets and linear algebra.

For controllable driftless systems with $m$ inputs and $m+1$ states, it is straightforward to demonstrate their linearizability through pure prolongations, as discussed in (\cite{IJC2008}). Additionally, comprehensive conditions outlining the necessity and sufficiency for a driftless system with two inputs to undergo linearization via pure prolongations were elucidated in (\cite{CDC2000}).

The present manuscript generalizes these results by providing sufficient conditions for driftless systems with $m$ ($m>2$) inputs and either $2m$ or $2m-1$ states to be linearizable by prolongations. In most of the cases, these conditions differ from the sufficient conditions for differential flatness provided in (\cite{MR94}). Consequently, our findings also broaden the set of systems recognized as differentially flat for $m$ inputs, $2m$ or $2m-1$  states. The three-input case is studied first because it offers a simpler setting that helps to convey the main ideas and intuition before presenting the general construction. Moreover, for $m = 3$ we systematically apply the algorithm presented in (\cite{Levine2023}).

The paper is structured as follows: Section \ref{background} introduces the necessary mathematical background and reviews relevant existing results. Section \ref{section3inputs} explores the study of three-input driftless systems, with Section \ref{6states} presenting specific results for the case of six states using the algorithm from \cite{Levine2023}; the proof of the main result is detailed in the appendix. Section \ref{5states} extends these findings to systems with five states, building on the results from the previous section. Section \ref{m-inputs-section} generalizes the analysis to $m$-input systems with $2m$ or $2m-1$ states. Finally, the paper concludes with a summary of findings and potential directions for future research.

\section{Mathematical Background}
\label{background}

A control system in affine form over a smooth $n$-dimensional manifold $X$  is given by

\begin{equation} \dot{x}=f(x)+\sum_{i=1}^{m} g_i(x)u_i \label{affine} \end{equation}

where $x \in X$ is the n-dimensional state vector,  with $m < n$ (otherwise the problem of feedback linearization is trivial) and $f$ and $g_i$ are smooth vector fields in the tangent bundle $TX$ of $X$ for each $u=(u_1,\dots,u_m)$. We stress that, although all the results of this paper are done in suitably chosen local charts, all the results are intrinsic.

$f(x)$ is called the drift vector field, while $g_i(x)$ are the input vector fields. These input vector fields are assumed to be independent since, otherwise, one can reduce the number of inputs by removing redundant directions until the remaining input vector fields are linearly independent.

The system is static feedback linearizable if there exists a diffeomorphism $z=\phi(x)$ and a regular feedback law $u=a(x)+b(x)v$ such that, in the new variables, the system becomes linear. A necessary and sufficient condition for a control system to be linearizable by static feedback was given in (\cite{HSM}, \cite{JR}):

The distributions

\[ D_0=\{ g_1,\dots,g_m \} \quad D_i=\{g_1,\dots,g_m, \dots, ad_f^i g_1,\dots,ad_f^i g_m \} \]

where we use the notation $\{g_1,\dots,g_m\}$ for the distribution generated by the vector fields $g_1,g_2\dots,g_m$, must be of constant rank and involutive. Moreover, there exists $i$ such that the rank of $D_i$ equals $n$.\footnote{We have used the classical notations $ad_{f}^{0}g=g$ and $ad_{f}^{k}g = [f, ad_{f}^{k-1}g]$ for all $k\geq 1$.} Throughout the paper it is assumed that all the distributions have constant rank as the Frobenius Theorem  (See, for instance, (\cite{Chevalley}, \cite{Isidori}, \cite{NVdS})), applies only for regular distributions. Several studies have investigated control systems with singularities (\cite{KLO}), which can be either apparent (in which case an appropriate change of the set of flat outputs resolves the issue) or intrinsic. Intrinsic singularities include the set of points where the system fails to satisfy the strong accessibility rank condition. The analysis of systems with singularities lies beyond the scope of the present paper.

A driftless control system is

\begin{equation} \dot{x}=\sum_{i=1}^{m} g_i(x)u_i \label{driftless} \end{equation}

Note that this system is not static feedback linearizable since it would require the involutivity of the distribution $ \left\{g_{1}, \dots , g_{m}\right\}$ that would in turn imply that  the brackets $\ad^{k}_{\left( u_{1}g_{1} + \dots + u_{m}g_{m}\right)} g_{i}$, $i=1,\dots,m$, $k\geq 1$ would all remain in $ \left\{g_{1}, \dots, g_{m}\right\}$, of rank $m < n$, thus preventing the system from being controllable.

A dynamic system
\[ \dot{x}=f(x,u)  \]
is said to be differentially flat if and only if there exists $m$ functions $y=(y_1,\dots,y_m)$ (called flat outputs) such that
\begin{equation}\label{flatout:def} y=y(x,u,\dot{u},\dots) \quad x=x(y,\dot{y},\dots) \quad u=u(y,\dot{y},\dots) \end{equation}
where the dependence is up to a finite number of derivatives.

Differential flatness was introduced in (\cite{FLMR1995}). It has been proven that a system is differentially flat if, and only if, it is dynamic feedback linearizable by \emph{endogenous dynamic feedback} (see \cite{FLMR1995}, \cite{FLMR1999}). The first of these papers introduces differential flatness in the framework of differential algebra, while the second one uses infinite jets. Although necessary and sufficient conditions for verifying differential flatness exist (\cite{Levine2011}), they do not yield a finite set of criteria for determining whether a given system is differentially flat. There are some easy conditions for driftless systems. More precisely,

\begin{thm}
\label{theoremMR}
A two input driftless system
\begin{equation} \dot{x}=g_1(x)u_1+g_2(x)u_2  \label{2input} \end{equation}
is differentially flat if, and only if, the ranks of the following distributions

\[ D_0= \{g_1,g_2\} \qquad D_{i+1}=D_i+ [D_i,D_i] \]
are $d_i=\rk D_i=i+2$, $\forall \, i=0,\dots,n-2$, with the usual notation $\{g_1,g_2\}$ for the distribution generated by $g_1$ and $g_2$  and with $[D,D]  =  \{ [\alpha, \beta] \mid \alpha, \beta \in D\}$, $[\alpha, \beta]$ denoting the Lie bracket of the vector fields $\alpha$ and $\beta$.
\end{thm}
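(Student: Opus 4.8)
The plan is to prove the statement in the form of an equivalence, treating the two directions separately, and to exploit the fact that a two-input driftless system is flat exactly when it is equivalent, under dynamic feedback, to a chain of integrators — which for two inputs reduces to the classical Cartan / Goursat normal form for rank-two Pfaffian systems (equivalently, the existence of an absolute-equivalence transformation to the Brunovsk\'y form). Concretely, I would set up the annihilating codistribution $\Omega_0 = D_0^\perp$ of the input distribution $D_0 = \{g_1, g_2\}$, a Pfaffian system of corank two on $X$, and note that the derived flag $\Omega_0 \supset \Omega_1 \supset \cdots$ of codistributions is dual to the Lie flag $D_0 \subset D_1 \subset \cdots$ defined in the statement via $D_{i+1} = D_i + [D_i, D_i]$. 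The rank condition $\rk D_i = i+2$ for $i = 0, \dots, n-2$ is then precisely the statement that the derived flag drops in corank by exactly one at each step until it reaches zero, i.e. that $\Omega_0$ is a Goursat (or Engel-type) flag.

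For the \emph{sufficiency} direction, assuming $\rk D_i = i+2$ for all $i \le n-2$, I would invoke the Goursat normal form theorem: a corank-two Pfaffian system whose derived flag has the above generic length-one drops is locally equivalent, via a diffeomorphism on the state manifold, to the canonical contact system, which is exactly the driftless two-input chain of integrators $\dot z_1 = z_2 u, \ \dot z_2 = z_3 u, \dots$ together with $\dot w = u$ (the classical Goursat / Cartan normal form in dimension $n$). That normal form is manifestly flat, with the two endpoints of the chain serving as flat outputs; since flatness is invariant under the diffeomorphism $z = \phi(x)$ (and under the time-reparametrization / static feedback implicit in going between the Pfaffian and control pictures), the original system is flat. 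Equivalently, one can argue directly: build the flat output by repeatedly applying the "Cartan prolongation / projection" construction, using at each step a first integral of the next distribution in the flag, whose existence is guaranteed by involutivity of $D_i$ together with the constant-rank hypothesis (Frobenius), as summarized in \cite{Levine2009}.

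For the \emph{necessity} direction, suppose the system is flat. By \cite{FLMR1995}, \cite{FLMR1999} it is equivalent by endogenous dynamic feedback to a linear controllable system, and for two inputs one shows (this is the content of \cite{MR94}) that such a system admitting flat outputs must in fact be equivalent to the two-input Brunovsk\'y chain described above; pulling the Lie flag back through this equivalence, and observing that the numbers $d_i = \rk D_i$ are invariants of the structure (they are defined intrinsically from the distribution by Lie brackets and rank, hence unchanged under diffeomorphism), one computes the $d_i$ in the normal-form coordinates and reads off $d_i = i+2$. The alternative, self-contained route is contrapositive: if the rank sequence is ever larger than $i+2$ (it cannot be smaller, since adding one bracket can only raise the rank — and if it ever fails to increase before reaching $n$ the system is not controllable, hence not flat), then at the first bad step the derived flag of $\Omega_0$ fails to be Goursat; invoking the Bryant–Gardner / Giaro–Kumpera–Ruiz obstruction, the Pfaffian system is not equivalent to the contact system, and one then argues that the associated control system cannot be flat — roughly, because the "defect" in the flag forces any candidate flat output to require an unbounded number of derivatives, contradicting \eqref{flatout:def}.

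The main obstacle I anticipate is the necessity direction, specifically justifying rigorously that a genuine (non-length-one) jump or stall in the derived flag obstructs flatness: the sufficiency side is essentially Goursat's theorem packaged for control systems, but the converse requires either the full machinery of \cite{MR94} on two-input driftless systems or a careful argument that the Giaro–Kumpera–Ruiz singularities in the flag are incompatible with the finite-order dependence in the definition of flatness. I would most likely cite \cite{MR94} and \cite{Levine2009} for this step rather than reproving it, and devote the written proof to making the dictionary between the Lie flag $\{D_i\}$ and the derived flag of $\Omega_0$ completely explicit, so that the quoted normal-form result applies verbatim.
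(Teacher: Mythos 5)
You should first note that the paper contains no proof of Theorem \ref{theoremMR}: it is quoted as a known result and the reader is sent to \cite{MR94} ("The details can be found in..."), so the only meaningful comparison is with the literature. Your translation of the statement is correct: since $D_{i+1}=D_i+[D_i,D_i]$, the flag $\{D_i\}$ is the derived flag of $D_0$, dual to the derived flag of the Pfaffian system $\Omega_0=D_0^{\perp}$, and the condition $\rk D_i=i+2$ is the Goursat condition. Your sufficiency route (Goursat condition $\Rightarrow$ chained/Goursat normal form $\Rightarrow$ flatness, with the two ends of the chain as flat outputs, flatness being invariant under diffeomorphism and static feedback) is indeed the mechanism behind \cite{MR94}. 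One caveat you should make explicit: the rank hypotheses alone do not give the Goursat normal form at \emph{every} point (Kumpera--Ruiz singularities satisfy them), so the equivalence to chained form, hence the flat output construction, is only valid on an open dense set; this is consistent with the paper's standing constant-rank/genericity conventions, but it must be said, since otherwise "invoke the Goursat normal form theorem" overstates what that theorem gives.

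The genuine gap is the necessity direction, which is the substantive content of the theorem. As written you offer two options, neither of which is a proof: citing \cite{MR94} is circular if the task is to prove the quoted theorem (this \emph{is} the theorem of \cite{MR94}), and the contrapositive sketch rests on the assertion that a jump of two or more in the rank sequence "forces any candidate flat output to require an unbounded number of derivatives," which is a claim, not an argument. The stall case is fine and you handle it correctly (if $D_{i+1}=D_i$ with $\rk D_i<n$, then $D_i$ is involutive, contains the Lie algebra generated by $g_1,g_2$, so the system is not accessible and hence not flat), but the case $\rk D_{i+1}\geq \rk D_i+2$ requires a real obstruction argument -- in \cite{MR94} this is done by showing that flatness of a two-input driftless system forces equivalence to the chained form, from which the rank sequence is read off. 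Appealing to the Giaro--Kumpera--Ruiz singularity theory does not by itself do this either, since that theory concerns systems already satisfying the Goursat rank condition. So either reproduce (or carefully quote, with hypotheses matched) the Martin--Rouchon necessity argument, or accept that your write-up proves only the sufficiency half; as it stands the "if and only if" is not established.
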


The details can be found in (\cite{MR94}). In the same paper it is shown that the above condition is a sufficient condition for flatness when the number of inputs is strictly greater than two.

On the other hand, for systems with $n-2$ inputs, flatness has been proven to be equivalent to controllability (\cite{MR95}).

Consider the m-input system (\ref{affine}) with $x \in X$,  a n-dimensional smooth manifold. A pure prolongation of the system of order $\mbf{j}\triangleq (j_{1}, \ldots, j_{m})$ with $j_{1}, \ldots, j_{m} \in \N$, is given by
the associated prolonged vector field
\[
g^{(j)} \;\doteq\; g
+ \sum_{i=1}^{m} \sum_{p=0}^{j_i-1} u_i^{(p+1)} \frac{\partial}{\partial u_i^{(p)}},
\]

where the prolonged states are $u_i^{(p)}$, $p=0, \ldots, j_{i}-1$,  and the control inputs
$u_i^{(j_i)}$. In other words, the original states remain the same, while the new states added to the system are inputs and derivatives of these inputs up to the order $\mbf{j-1}$. the prolonged state thus belongs to the prolonged manifold $X\times \mathbb{R}^{\vert \bf{j}\vert}$, with the notation $\vert \mbf{j} \vert \triangleq \sum_{i=1}^{m} j_{i}$.
Throughout the paper, the notation $g$ will denote the vector field $\sum_{j=1}^{n}g_{j}\ddxx{j}$ for both the vector field $g \in TX$ and the natural embedding
$g \in TX \times \mathbb{R}^{\vert \bf{j}\vert}$.\footnote{this is not the usual notation in differential geometry, when typically the superscripts are used for coordinates. In the control community it is more common to use the subscripts for the coordinates and superscripts for the order of derivative with respect to time, see for instance (\cite{Isidori},\cite{NVdS}).}

\begin{definition}
System (\ref{affine}) is \emph{flat by pure prolongation or linearizable by pure prolongation} (in short P$^2$-flat)
at a point $(x_{0},\ol{u}_{0}) \in X \times \mathbb{R}^{\vert \bf{j}\vert}$ if, and only if, there exist finite $\mbf{j}$
such that the prolonged system of order $\mbf{j}$ is equivalent by
diffeomorphism and feedback to a system in Brunovsk\'y form.
\end{definition}
In the coordinates $(x,\ol{u})$, the notation $\ol{u}$ stands for
$$\ol{u} \triangleq (u,\dot{u},\ddot{u},\ldots,u^{(\bf{j-1})}) \triangleq \left( u_{i}^{(k)} \; ; \; i=1,\dots,m\; ; 0 \le k \le j_i  -1\right) \in \R^{\vert \bf{j}\vert}$$

More on this construction can be found in (\cite{Levine2009,Levine2011}).

Regarding pure prolongations, a nonlinear driftless system with $n-1$ inputs and $n$ states is flat by pure prolongations if, and only if, it is controllable (\cite{IJC2008}). For driftless systems with two inputs, the condition for a system to be flat by pure prolongations (\cite{CDC2000}) is shown in the following theorem:

The two input driftless system
$$\dot{x} = g_1 u_1  + g_2 u_2$$
is flat by pure prolongations if and only if
\begin{description}
\item [i)] $\displaystyle ad^2_{\left( ad_{g_2}^{k}g_1\right)}g_2 \in
 \{g_1,ad_{g_2}g_1,\ldots,ad_{g_2}^{k}g_1 \}$,  $\forall k \in \{1,\ldots,n-3\}$
\item [ii)] $\displaystyle
\rk \{g_1,ad_{g_2}g_1,\ldots,ad_{g_2}^{n-2}g_1,g_2\}=n$
\end{description}
or
\begin{description}
\item [i)] $\displaystyle ad^2_{\left( ad_{g_1}^{k}g_2 \right)}g_1 \in \{g_2,ad_{g_1}g_2,\ldots,ad_{g_1}^{k}g_2 \}$,
  $\forall k \in \{1,\ldots,n-3\}$
\item [ii)] $\displaystyle
\rk \{ g_2,ad_{g_1}g_2,\ldots,ad_{g_1}^{n-2}g_2,g_1 \}=n$
\end{description}

In order to illustrate all the above definitions and results, let us introduce a very simple example based on a simplified (kinematic) model of a car:

\begin{equation*}
    \begin{pmatrix} \dot{x_1} \\ \dot{x_2}  \\ \dot{x_3}   \end{pmatrix}
        =
    \begin{pmatrix} \cos x_3 \\ \sin x_3 \\ 0  \end{pmatrix}u_1
    +\begin{pmatrix} 0 \\  0 \\ 1  \end{pmatrix}u_2
\end{equation*}

which is a $2$-input system defined on the $3$ dimensional manifold $X=\R^{2}\times S^{1}$. This system is not static feedback linearizable since the Lie bracket $[g_1,g_2]$ does not belong to the distribution spanned by $g_1$ and $g_2$. On the other hand, the system is flat since the distributions

\[ D_0=\{ g_1,g_2 \} \quad D_1=\{ g_1,g_2,[g_1,g_2] \} \]

have rank $2$ and $3$ respectively. Actually, the system is flat by pure prolongation since a one order prolongation of the input $u_1$ leads to the prolonged system:

\begin{equation*}
   \begin{pmatrix} \dot{x_1} \\ \dot{x_2}  \\ \dot{x_3} \\ \dot{u}_1^{(0)}   \end{pmatrix}
        =
   \begin{pmatrix} \cos x_3 \\ \sin x_3 \\ 0 \\ 0 \end{pmatrix}u_1^{(0)}
    +\begin{pmatrix} 0 \\  0 \\ 1 \\ 0 \end{pmatrix}u_2 +  \begin{pmatrix} 0 \\  0 \\ 0 \\ 1 \end{pmatrix}u_1^{(1)}
\end{equation*}

 which is static feedback linearizable. The flat outputs, that can be computed through this prolongation, are $y_1=x_1$ and $y_2=x_2$. It is easy to see that all the states and inputs can be written as functions of the flat outputs and their derivatives (see \eqref{flatout:def}):

\[
\begin{aligned} &x_1=y_1, \quad x_2=y_2, \quad x_3= \arctan \frac{\dot{x}_2}{\dot{x}_1} =  \arctan \frac{\dot{y}_2}{\dot{y}_1},\\
&u_{1}^{(0)} = u_1=\sqrt{\dot{x}_1^2+\dot{x}_2^2}= \sqrt{\dot{y}_{1}^2+\dot{y}_{2}^2}, \;\;  u_{1}^{(1)} =\dot{u}_{1}^{(0)} , \;\;
u_{2}^{(0)} = u_2=\dot{x}_3 = \frac{\ddot{y}_{2}\dot{y}_{1}-\dot{y}_{1}\ddot{y}_{2}}{\dot{y}_{1}^2+\dot{y}_{2}^2}.
\end{aligned}
\]

\section{Linearization by pure prolongations of driftless systems with 3 inputs}
\label{section3inputs}

In this Section we will study driftless $3$-input systems:

\begin{equation}\label{3inputs}
\dot{x} = u_{1}^{(0)}g_{1}(x) + u_{2}^{(0)}g_{2}(x) + u_{3}^{(0)}g_{3}(x)
\end{equation}
with with $x \in X$,  a n-dimensional smooth manifold, and $\rk{g_{1}(x), g_{2}(x), g_{3}(x)} = 3$ for all $x$ in a suitable open dense subset of $X$. The superscript $0$ means that no prolongation has yet been added to the system.

We aim at obtaining  conditions for which this system is flat by pure prolongation.

In order to study the P$^{2}$-flatness of \eqref{3inputs}, we apply the algorithm described in (\cite[Section 4.3]{Levine2023}), based on the theorem \ref{CNSP2:thm} recalled here below.

The unknown prolongation order\footnote{It is proven in \cite[Lemma 4.4]{Levine2023}, that considering $j_{1} = 0$ suffices for our purpose. It was also proven in \cite{ST96}} is denoted by $\mbf{j}=(0, j_{2}, j_{3})$. Recall that we also denote by $\mid \bj \mid = \sum_{i=1}^{m}  j_{i}$.  The prolonged vector fields are

\begin{equation}\label{prolvecfields:eq}
\begin{aligned}
&g_{0}^{(\mbf{j})}= \left(u_{1}^{(0)}g_{1}(x) + u_{2}^{(0)}g_{2}(x) + u_{3}^{(0)}g_{3}(x)\right) + \sum_{p=0}^{j_{2}-1} u_{2}^{(p+1)}\ddu{2}{p}\ + \sum_{p=0}^{j_{3}-1} u_{3}^{(p+1)}\ddu{3}{p}\\
&g_{1}^{(\mbf{j})}= \ddu{1}{0}, \qquad g_{2}^{(\mbf{j})}= \ddu{2}{j_{2}}, \qquad g_{3}^{(\mbf{j})}= \ddu{3}{j_{3}}.
\end{aligned}
\end{equation}

According to \cite{Levine2023},  we consider the distributions
$\Delta_{k}^{(\mbf{j})}$ and $\Gamma_{k}^{(\mbf{j})}$ given by

\begin{equation}\label{Deltadef:eq}
\begin{aligned}
\Delta_{k}^{(\mbf{j})} &\triangleq \sum_{p=1}^{m} \left\{ \ad_{g_{0}^{(\mbf{j})}}^{l-j_{p}}\ddu{p}{0}  \mid l= j_{p}, \ldots, k\ \right\}\\
\Gamma_{k}^{(\mbf{j})} &\triangleq \bigoplus_{i=1, \ldots, m}\{\ddu{i}{j_{i}-r} \mid r=0, \ldots, k\vee(j_{i}-1))\}
\end{aligned}
\end{equation}

and
\begin{equation}\label{G-k-j:eq}
G_{k}^{(\mbf{j})} =  \Gamma_{k}^{(\mbf{j})} \oplus \Delta_{k}^{(\mbf{j})}, \quad \forall k\geq 0.
\end{equation}

\begin{thm}
\label{CNSP2:thm}
The necessary and sufficient conditions for $P^2$-flatness are:
\begin{itemize}
\item[(i)] $[\Delta_{k}^{(\mbf{j})}, \Delta_{k}^{(\mbf{j})}] \subset \Delta_{k}^{(\mbf{j})}$ and $\rk \Delta_{k}^{(\mbf{j})}$  locally constant for all $k$,
\item[(ii)]$[\Gamma_{k}^{(\mbf{j})}, \Delta_{k}^{(\mbf{j})}] \subset \Delta_{k}^{(\mbf{j})}$ for all $k$,
\item[(iii)] $\exists k_{\star}^{(\mbf{j})} \leq n+\mid \bj \mid$ such that $\rk \Delta_{k}^{(\mbf{j})} = n+m$ for $k\geq k_{\star}^{(\mbf{j})}$.
\end{itemize}
\end{thm}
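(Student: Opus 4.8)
The plan is to recognize Theorem \ref{CNSP2:thm} as a translation of the classical static feedback linearization criterion (the Hunt--Su--Meyer / Jakubczyk--Respondek conditions recalled earlier) applied to the prolonged system \eqref{prolvecfields:eq}, and then to show that the nested distributions $D_i$ of that criterion coincide with the $G_k^{(\mbf j)}$, while the involutivity/rank hypotheses on the $D_i$ are captured exactly by conditions (i)--(iii) on the $\Delta_k^{(\mbf j)}$ and $\Gamma_k^{(\mbf j)}$. First I would unwind the definition of $P^2$-flatness: by definition the system is $P^2$-flat at $(x_0,\ol u_0)$ iff for some finite $\mbf j=(0,j_2,j_3)$ the prolonged system with vector fields $g_0^{(\mbf j)}, g_1^{(\mbf j)}, g_2^{(\mbf j)}, g_3^{(\mbf j)}$ is equivalent by diffeomorphism and regular feedback to Brunovsk\'y form, i.e.\ is static feedback linearizable on the prolonged manifold $X\times\R^{\vert\bf j\vert}$ (which has dimension $n+\vert\bf j\vert$, and note $n+m = (n+\vert\bf j\vert)$ only accounting; here the relevant full rank is $n+m$ because $j_1=0$ contributes the extra $\ddu{1}{0}$). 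So the whole statement reduces to: the Hunt--Su--Meyer conditions for the quadruple $(g_0^{(\mbf j)}; g_1^{(\mbf j)}, g_2^{(\mbf j)}, g_3^{(\mbf j)})$ hold iff (i)--(iii).

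Next I would carry out the structural computation identifying the canonical distributions. Set $D_0 = \{g_1^{(\mbf j)},g_2^{(\mbf j)},g_3^{(\mbf j)}\}$ and $D_{k+1} = D_k + \ad_{g_0^{(\mbf j)}} D_k$ as in the linearization test. The key algebraic fact, established in \cite{Levine2023} and to be re-derived here, is that because each $g_i^{(\mbf j)}$ is a pure partial derivative $\ddu{i}{j_i}$ and $g_0^{(\mbf j)}$ has the triangular ``shift'' structure $\sum_p u_i^{(p+1)}\ddu{i}{p}$ plus the base field $g$, the iterated brackets split cleanly: the ``vertical'' part of $D_k$ (spanned by the $\ddu{i}{j_i-r}$ obtained by differentiating along the integrator chains) is exactly $\Gamma_k^{(\mbf j)}$, and the ``horizontal/base'' part, once the chains saturate, is generated by the $\ad_{g_0^{(\mbf j)}}^{l-j_p}\ddu{p}{0}$, i.e.\ $\Delta_k^{(\mbf j)}$; hence $D_k = G_k^{(\mbf j)} = \Gamma_k^{(\mbf j)}\oplus\Delta_k^{(\mbf j)}$ for all $k$. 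One must check carefully that the direct sum is genuine (the vertical and the effective-base parts are transverse, since $\Gamma$ lives in the $u_i^{(\le j_i)}$-directions that have already been ``used up'' while $\Delta$ injects into $TX$ modulo those) and that $\ad_{g_0^{(\mbf j)}}$ maps a $\Gamma$-generator either to another $\Gamma$-generator or, at the top of a chain, into $\Delta$ — this bookkeeping with the index ranges $l=j_p,\dots,k$ and $r=0,\dots,k\vee(j_i-1)$ is the technical heart.

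With $D_k = \Gamma_k^{(\mbf j)}\oplus\Delta_k^{(\mbf j)}$ in hand, the three clauses match up termwise. Constancy of rank of all $D_k$: since $\Gamma_k^{(\mbf j)}$ is spanned by coordinate vector fields its rank is automatically constant, so constancy of $\rk D_k$ is equivalent to constancy of $\rk\Delta_k^{(\mbf j)}$, giving the rank part of (i). Involutivity of $D_k$: expanding $[\Gamma\oplus\Delta,\Gamma\oplus\Delta] = [\Gamma,\Gamma]+[\Gamma,\Delta]+[\Delta,\Delta]$, one notes $[\Gamma_k^{(\mbf j)},\Gamma_k^{(\mbf j)}]=0$ because coordinate fields commute, while $[\Gamma,\Delta]$ and $[\Delta,\Delta]$ must land back in $D_k$; using that $[\Gamma,\Delta]$ has no $\Gamma$-component (differentiating a $\Delta$-generator by $\ddu{i}{j_i-r}$ kills the integrator-shift terms and leaves something with only base/$\Delta$-type directions) this reduces exactly to (ii) $[\Gamma_k^{(\mbf j)},\Delta_k^{(\mbf j)}]\subset\Delta_k^{(\mbf j)}$ together with (i) $[\Delta_k^{(\mbf j)},\Delta_k^{(\mbf j)}]\subset\Delta_k^{(\mbf j)}$. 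Finally the existence of $i$ with $\rk D_i = n+\vert\bf j\vert + \text{(correction)}$ — more precisely reaching the full prolonged dimension — translates, again because the $\Gamma$ part is full and fixed for $k$ large, into: there is $k_\star^{(\mbf j)}$ with $\rk\Delta_k^{(\mbf j)}=n+m$ for $k\ge k_\star^{(\mbf j)}$, which is (iii); the bound $k_\star^{(\mbf j)}\le n+\vert\bf j\vert$ is just the obvious bound on how many steps a rank-increasing filtration on an $(n+\vert\bf j\vert)$-dimensional manifold can take, and existence of a finite prolongation $\mbf j$ at all is where controllability of the original driftless system enters (the strong accessibility rank condition is forced since otherwise no $\Delta_k$ can reach rank $n+m$).

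The main obstacle I anticipate is the second step — the precise identification $D_k = \Gamma_k^{(\mbf j)}\oplus\Delta_k^{(\mbf j)}$ and the verification that the decomposition is stable under $\ad_{g_0^{(\mbf j)}}$ with the correct index ranges. Everything depends on understanding how the ``shift'' part $\sum_{p=0}^{j_i-1}u_i^{(p+1)}\ddu{i}{p}$ of $g_0^{(\mbf j)}$ interacts with repeated bracketing: it produces the integrator chains that populate $\Gamma_k^{(\mbf j)}$, and only once a chain is exhausted do further brackets ``leak'' into the base manifold and start generating $\Delta_k^{(\mbf j)}$. Getting the combinatorics of when $l-j_p$ first becomes positive, and why the caps $k\vee(j_i-1)$ appear, right is the delicate part; once that is settled, clauses (i)--(iii) are the three components of the classical linearization test read off the decomposition, and the theorem follows from Hunt--Su--Meyer applied to the prolonged system together with the definition of $P^2$-flatness. (This is essentially a self-contained re-derivation of the relevant parts of \cite{Levine2023}, included here for completeness.)
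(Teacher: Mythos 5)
First, a point of reference: the paper itself contains no proof of Theorem \ref{CNSP2:thm}; it is recalled from \cite[Section 4.3]{Levine2023} and used as a black box, so your proposal is a reconstruction of a cited result rather than an alternative to an in-paper argument. Much of your structural computation is sound and is indeed the natural route: the iterated brackets $\ad_{g_{0}^{(\mbf{j})}}^{l}\ddu{i}{j_{i}}$ walk down the integrator chain $\ddu{i}{j_{i}}\to\cdots\to\ddu{i}{0}\to g_{i}\to\cdots$, which gives exactly the splitting of the bracket-generated distribution into $\Gamma_{k}^{(\mbf{j})}\oplus\Delta_{k}^{(\mbf{j})}=G_{k}^{(\mbf{j})}$; and since elements of $\Delta_{k}^{(\mbf{j})}$, of $[\Delta_{k}^{(\mbf{j})},\Delta_{k}^{(\mbf{j})}]$ and of $[\Gamma_{k}^{(\mbf{j})},\Delta_{k}^{(\mbf{j})}]$ have no components along the directions $\ddu{i}{q}$ with $q\geq 1$, involutivity of $G_{k}^{(\mbf{j})}$ does split into (i) and (ii), and constancy of $\rk G_{k}^{(\mbf{j})}$ reduces to constancy of $\rk \Delta_{k}^{(\mbf{j})}$.

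The genuine gap is in your first step, the reduction to the Hunt--Su--Meyer test. The system $(g_{0}^{(\mbf{j})};\ddu{1}{j_{1}},\ldots,\ddu{m}{j_{m}})$ to which you apply that test does not live on the prolonged manifold $X\times\R^{\vert\mbf{j}\vert}$ of the Definition: its coordinates include the inputs $u_{i}^{(j_{i})}$ themselves, so it is the prolongation of order $\mbf{j}+\mathbf{1}$ on a space of dimension $n+m+\vert\mbf{j}\vert$ (this is also the correct accounting behind clause (iii): $\Gamma$ eventually supplies $\vert\mbf{j}\vert$ directions and $\Delta$ must supply the remaining $n+m$; your parenthetical ``$n+m=(n+\vert\mbf{j}\vert)$'' is garbled). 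P$^{2}$-flatness of order $\mbf{j}$, by the paper's Definition, is static feedback linearizability of the order-$\mbf{j}$ prolongation on $X\times\R^{\vert\mbf{j}\vert}$, whose input vector fields are $g_{i}$ for the unprolonged channels and $\ddu{i}{j_{i}-1}$ for the prolonged ones; its Hunt--Su--Meyer distributions are therefore \emph{not} the $G_{k}^{(\mbf{j})}$. To make your equivalence exact at a fixed $\mbf{j}$ you need the bridging fact that adding, and conversely removing, one integrator on every input channel preserves static feedback linearizability (a Charlet--L\'evine--Marino-type invariance, cf.\ \cite{CLM89}); only the ``adding'' direction is immediate. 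Without it, your argument establishes that (i)--(iii) at $\mbf{j}$ characterize linearizability of the order-$(\mbf{j}+\mathbf{1})$ prolongation, which does give the bare existence form of the theorem but shifts the prolongation order by one, whereas the paper subsequently reads the minimal order and the flat-output PDEs \eqref{PDEsj:eq} off the same $\mbf{j}$ appearing in (i)--(iii). Supplying that lemma, or running the linearizability test directly on $X\times\R^{\vert\mbf{j}\vert}$ and showing it transports to the $G_{k}^{(\mbf{j})}$, is the missing ingredient; the remaining bookkeeping (commuting $\Gamma$-fields, the index caps, the saturation bound for $k_{\star}^{(\mbf{j})}$) you have essentially right.
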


We also consider the sequence of integers
$$\rho_{k}^{(\bj)} \triangleq \rk  G_{k}^{(\bj)} / G_{k-1}^{(\bj)}  \quad \forall k\geq 1,  \qquad \rho_{0}^{(\bj)} \triangleq \rk G_{0}^{(\bj)} = m$$
and, with the notation $ \# A$ for the number of elements of a set $A$, the Brunovsk\'{y} controllability indices of order $\bj$
$$\kappa_{k}^{(\bj)} \triangleq \# \{ l \mid \rho_{l}^{(\bj)} \geq k \}, \quad k= 1,\ldots, m.$$
The equivalent linear system is thus
$$y_{1}^{(\kappa_{1}^{(\bj)})}=v_{1}, \quad y_{2}^{(\kappa_{2}^{(\bj)})}=v_{2}, \quad y_{3}^{(\kappa_{3}^{(\bj)})}=v_{3}$$
where $(y_{1}, y_{2}, y_{3})$ is a flat output, obtained as a locally non trivial solution
of the system of PDE's

\begin{equation}\label{PDEsj:eq}
\left< G_{k}^{(\bj)},  dy_{i} \right>= 0, \; k= 0,\ldots, \kappa_{i}^{(\bj)}-2, \quad \mathrm{with~}
\quad \left< G_{\kappa_{i}^{(\bj)}-1}^{(\bj)},  dy_{i} \right>\neq 0, \quad i=1, \ldots,m.
\end{equation}

Here, $dy$ stands for the differential of the function $y$ and, hence, it is a one form that annihilates $G_{k}^{(\bj)}$.

\subsection{Linearization by prolongations of driftless systems with $3$ inputs and $6$ states}
\label{6states}

We set
\begin{equation}\label{H0:eq}
\begin{aligned}
H_{0,3}&\triangleq \left\{g_{1}, g_{2}, g_{3}\right\},\\
H_{0,2}&\triangleq \left\{g_{1}, g_{2}\right\}, \quad H'_{0,2} \triangleq  \left\{g_{1}, g_{3}\right\}, \quad H''_{0,2}\triangleq  \left\{g_{2}, g_{3}\right\},\\
H_{0,1}&\triangleq \left\{g_{1}\right\}.
\end{aligned}
\end{equation}
where we have denoted by $g_{i}$ the vector field $g_{i}\frac{\partial}{\partial x}$, $i=1,2,3$, for simplicity's sake. The first index i of the subscripts (i,j) corresponds to the maximum number of Lie brackets involved in the distribution, while the second subscript j is related to the number of vector fields that we have at the initialization.

We further introduce
\begin{equation}\label{H2:eq}
\begin{aligned}
H_{2,3} &\triangleq \left\{g_{1}, g_{2}, g_{3}, [g_{2},g_{1}], [g_{3},g_{2}], [g_{3},[g_{3},g_{1}]], [g_{3},[g_{3},g_{2}]] \right\}\\
H_{1,3} &\triangleq \left\{g_{1}, g_{2}, g_{3}, [g_{3},g_{1}], [g_{3},g_{2}]\right\}\\
H_{1,2} &\triangleq \left\{g_{1}, g_{2}, [g_{3},g_{1}], [g_{3},g_{2}]\right\}\\
H'_{1,2} &\triangleq \left\{g_{1}, g_{2}, [g_{2},g_{1}], [g_{3},g_{1}]\right\}\\
H_{1,1} &\triangleq \left\{ g_{1}, [g_{2},g_{1}], [g_{3},g_{1}]\right\}
\end{aligned}
\end{equation}

There are 2 possible initializations of the algorithm described in (\cite[Section 4.3]{Levine2023}), whether the largest involutive subdistribution of $H_{0,3}$ is, up to a suitable input permutation, $H_{0,2}$ or $H_{0,1}$ \ie:
\begin{equation}\label{init1:eq}
H_{0,2} = \ol{H_{0,2}}, \quad H_{0,3} \neq \ol{H_{0,3}}
\end{equation}
or
\begin{equation}\label{init2:eq}
H_{0,2} \neq \ol{H_{0,2}}, \quad H'_{0,2} \neq \ol{H'_{0,2}}, \quad H''_{0,2} \neq \ol{H''_{0,2}}, \quad
H_{0,3} \neq \ol{H_{0,3}}
\end{equation}
where we have denoted by $\overline{H}$ the involutive closure of a distribution $H$.

Note that the initialization \eqref{init1:eq} implies that $j_{1} = j_{2}=0$, whereas \eqref{init2:eq} implies that $j_{2}\geq 1$. In both cases we assume, without loss of generality, that $j_{3}\geq j_{2}$.

\begin{thm}
\label{3-6theorem}
Assume that $\left\{ g_{1}, g_{2}, g_{3}\right\}$ is not involutive. Two, and only two, cases are possible: either the largest involutive subdistribution of $\left\{ g_{1}, g_{2}, g_{3}\right\}$ is, up to a suitable permutation of the inputs, $\left\{ g_{1}, g_{2}\right\}$, or $\left\{ g_{1} \right\}$.
\begin{enumerate}
\item If the largest involutive subdistribution of $\left\{ g_{1}, g_{2}, g_{3}\right\}$ is $\left\{ g_{1}, g_{2}\right\}$,  a necessary and sufficient condition for the system \eqref{3inputs} to be P$^{2}$-flat is \\
\centerline{$H_{1,2} =  \left\{g_{1}, g_{2}, [g_{3},g_{1}], [g_{3},g_{2}]\right\}$ involutive with $\rk H_{1,2} = 4$, }
and the minimal prolongation order is $\mbf{j}= (0,0,2)$. \\
Moreover, the flat outputs $y_1,y_2,y_3$ are such that their differentials annihilate $H_{1,2}$.

\item If the largest involutive subdistribution of $\left\{ g_{1}, g_{2}, g_{3}\right\}$ is $\left\{ g_{1} \right\}$,  a necessary and sufficient condition for the system \eqref{3inputs} to be P$^{2}$-flat is \\
\centerline{$H'_{1,2} =  \left\{g_{1}, g_{2}, [g_{2},g_{1}], [g_{3},g_{1}]\right\}$ involutive, with $\rk H' _{1,2} = 3$,}\\
\centerline{$\rk H_{2,3} = 6$} \\
\centerline{ with $H_{2,3} = \left\{g_{1}, g_{2}, g_{3}, [g_{2},g_{1}], [g_{3},g_{2}], [g_{3},[g_{3},g_{1}]], [g_{3},[g_{3},g_{2}]] \right\}$,}\\
 and the minimal prolongation order is $\mbf{j}= (0,1,2)$. \\
 Moreover, the flat outputs $y_1,y_2,y_3$ are such that their differentials annihilate $H'_{1,2}$.
\end{enumerate}
\end{thm}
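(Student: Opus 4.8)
The plan is to apply Theorem~\ref{CNSP2:thm} to the two candidate prolongation orders dictated by the two initialization cases, showing in each case that the listed Lie-bracket conditions are exactly what is needed for conditions (i)--(iii) to hold, and conversely that P$^2$-flatness forces them. First I would treat the trichotomy in the preamble of the statement: since $\{g_1,g_2,g_3\}$ is not involutive but has rank $3$, its largest involutive subdistribution has rank $1$ or $2$; rank $0$ is impossible because any single $g_i$ spans an involutive line, and rank $2$ means (after permuting inputs) some pair $\{g_i,g_j\}$ is involutive, which we normalize to $\{g_1,g_2\}$; rank $1$ is the complementary case. This is the dichotomy encoded in \eqref{init1:eq}--\eqref{init2:eq}.

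For case 1, take $\mbf{j}=(0,0,2)$. I would compute the distributions $\Delta_k^{(\mbf{j})}$ and $\Gamma_k^{(\mbf{j})}$ from \eqref{Deltadef:eq} explicitly: with $j_1=j_2=0$ and $j_3=2$, the generators of $\Delta_k^{(\mbf{j})}$ are $\ddu{1}{0}$, $\ddu{2}{0}$, then $\ad_{g_0^{(\mbf{j})}}\ddu{3}{1}$, $\ad_{g_0^{(\mbf{j})}}^2\ddu{3}{0}$, and so on. Pushing these brackets through and projecting onto $TX$, the successive $\ad_{g_0^{(\mbf{j})}}$ iterations reproduce (up to lower-order terms living in $\Gamma_k^{(\mbf{j})}$) the vector fields $g_1,g_2,g_3,[g_3,g_1],[g_3,g_2],\dots$, so that involutivity of $\Delta_k^{(\mbf{j})}$ at the relevant step reduces to involutivity of $H_{1,2}=\{g_1,g_2,[g_3,g_1],[g_3,g_2]\}$, and the rank condition (iii), $\rk\Delta_k^{(\mbf{j})}=n+m=9$, reduces to $\rk H_{1,2}=4$ (one more than $\rk\{g_1,g_2,g_3\}=3$) being enough to fill up $X$ together with the prolonged directions. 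Condition (ii), $[\Gamma_k^{(\mbf{j})},\Delta_k^{(\mbf{j})}]\subset\Delta_k^{(\mbf{j})}$, is automatic here because the $\ddu{i}{k}$ act trivially on vector fields on $X$ and the chosen $\Delta_k^{(\mbf{j})}$ already absorbs the relevant prolonged directions. Then $k_\star^{(\mbf{j})}$ is read off and the flat-output PDE \eqref{PDEsj:eq} shows $dy_i$ annihilates $H_{1,2}$. For the converse, if some smaller prolongation worked, the obstruction from non-involutivity of $\{g_1,g_2,g_3\}$ (needing to prolong $u_3$ at least twice to ``involutivize'' $[g_3,g_1],[g_3,g_2]$) would be violated, and the failure of (i) or (iii) would follow; this uses Lemma~4.4 of \cite{Levine2023} to restrict to $j_1=0$ and the minimality argument of the algorithm.

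Case 2 is the same machinery with $\mbf{j}=(0,1,2)$: now $j_2=1$ too, so an extra layer of prolonged states on $u_2$ appears, and the $\Delta_k^{(\mbf{j})}$ chain, after projection, runs $g_1,g_2,\dots$ then $[g_2,g_1],[g_3,g_1]$ (this is where $H'_{1,2}$ enters, with the sharper requirement $\rk H'_{1,2}=3$, i.e. $[g_2,g_1],[g_3,g_1]$ add nothing beyond $\{g_1,g_2,g_3\}$ modulo..., so actually $H'_{1,2}$ has rank $3$ and must be involutive), and a second round produces the double brackets $[g_3,[g_3,g_1]],[g_3,[g_3,g_2]]$, whence the full-rank condition becomes $\rk H_{2,3}=6=n$, matching (iii) with $n+m=9$ once the three $\Gamma$-directions are added. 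I would verify (i) and (ii) by the same bracket bookkeeping, then extract $k_\star^{(\mbf{j})}$, the controllability indices $\kappa_i^{(\mbf{j})}$, and the flat outputs annihilating $H'_{1,2}$.

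The main obstacle I anticipate is the bookkeeping that shows the successive $\ad_{g_0^{(\mbf{j})}}$-iterates of the $\ddu{i}{p}$, after reduction modulo $\Gamma_k^{(\mbf{j})}$, generate exactly the claimed distributions $H_{1,2}$ (resp. $H'_{1,2}$, $H_{2,3}$) and no more --- in particular that no spurious higher brackets appear that would strengthen the necessary conditions, and that the cross terms $u_i^{(p)}$ in $g_0^{(\mbf{j})}$ contribute only to the $\Gamma$-part. Establishing this cleanly, together with checking that the prolongation orders $(0,0,2)$ and $(0,1,2)$ are genuinely minimal (no shorter $\mbf{j}$ satisfies Theorem~\ref{CNSP2:thm}), is the technical heart of the argument, and is where I would expect the proof in the appendix to spend most of its effort.
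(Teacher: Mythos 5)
Your overall route is the paper's: apply Theorem~\ref{CNSP2:thm}, compute $\Gamma_k^{(\mathbf{j})}$ and $\Delta_k^{(\mathbf{j})}$ for the candidate orders, observe that involutivity of the $\Delta$'s reduces to involutivity of $H_{1,2}$ (resp.\ $H'_{1,2}$) and that the rank condition (iii) reduces to $\rk H_{1,2}=4$ (resp.\ $\rk H_{2,3}=6$), read the flat outputs off \eqref{PDEsj:eq}, and settle the preamble by the rank $1$/rank $2$ dichotomy for the largest involutive subdistribution. This matches the appendix's sufficiency computations for $\mathbf{j}=(0,0,2)$ and $\mathbf{j}=(0,1,2)$.

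The genuine gap is in the necessity half of the statement. Your converse only rules out \emph{smaller} prolongations (``if some smaller prolongation worked\dots''), i.e.\ it would establish minimality of $(0,0,2)$ and $(0,1,2)$, but the theorem claims that if the system is P$^2$-flat at all --- with \emph{any} order $\mathbf{j}=(0,j_2,j_3)$, possibly larger --- then the stated involutivity and rank conditions must hold. This is why the paper's proof is organized as an exhaustive sweep, at each level $k$, over all cases $j_3=1$, $j_3=2$, $j_3\geq 3$ (and, in the second initialization, $j_2=1,\,2,\,\geq 3$ crossed with $j_3$): at $k=2$ the involutivity of $\Delta_2^{(\mathbf{j})}$ is shown to force involutivity of the same intrinsic distribution ($H_{1,2}$, $H'_{1,2}$, $H_{1,1}$ or $H_{1,3}$, depending on the case) for \emph{every} admissible $(j_2,j_3)$, which is what makes the conditions necessary and not merely sufficient for the particular order you fixed. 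Moreover, the mechanism that kills $j_3=1$ (resp.\ $(0,1,1)$) is not the heuristic ``one must prolong $u_3$ twice'': it is a saturation argument --- involutivity of $H_{0,2}$ and $H_{1,3}$ traps all higher-order brackets, so $\Delta_k^{(\mathbf{j})}=\Delta_2^{(\mathbf{j})}$ for all $k\geq 3$ with rank at most $8$ (resp.\ $7$) $<n+m=9$, contradicting condition (iii). Since your sketch fixes the target $\mathbf{j}$ in advance, it never confronts the larger-$\mathbf{j}$ direction of necessity nor supplies this saturation argument; as written it proves sufficiency plus minimality-from-below, which is strictly less than the theorem.
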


The proof of this theorem can be found in the appendix.

{\bf Remark:} As mentioned earlier in Theorem \ref{theoremMR} (\cite{MR94}), the known sufficient conditions for a controllable driftless system with six states and three inputs to be differentially flat, are $d_i= \rk D_i=3+i$, where

\begin{equation}\label{MRconditions}
\begin{aligned}
D_0 & = \left\{g_{1}, g_{2}, g_{3}\right\}.\\
D_1 & = \left\{ D_0,[D_0,D_0] \right\} = \left\{g_{1}, g_{2}, g_{3},[g_1,g_2],[g_1,g_3],[g_2,g_3]\right\}.\\
D_2 & = \left\{ D_1,[D_1,D_1] \right\}.
\end{aligned}
\end{equation}

 So, assuming that the input vector fields are independent, these conditions reduce to check $d_1=4$ and $d_2=5$. Recall that the conditions found in this paper are:

\begin{itemize}
\item $H_{1,2}$ is involutive and of rank $4$ if the largest involutive subdistribution of $\left\{g_{1}, g_{2}, g_{3}\right\}$ is $\left\{g_{1}, g_{2}\right\}$, or
\item $H'_{1,2}$ is involutive and of rank 3, and $H_{2,3}$ is of rank $6$ if the largest involutive subdistribution of $\left\{g_{1}, g_{2}, g_{3}\right\}$ is $\left\{g_{1}\right\}$.
\end{itemize}

Hence, our conditions differ from the ones mentioned above. As a result, in addition to identifying which systems are flat by pure prolongation, we establish a new sufficient condition for differential flatness in driftless systems with three inputs and six states.

\subsection{Linearization by prolongations of driftless systems with 3 inputs and 5 states}
\label{5states}

Regarding controllable driftless systems with three inputs and five states, let us remind, again, that the known conditions for flatness (\ref{MRconditions}), reduce to check if $d_0=3$ (which is usually assumed as long as the input vector fields are independent), $d_1=4$, and $d_2=5$ (this last condition is trivial if the controllability of the system is assumed). Hence, the only condition to be checked is $d_1=4$. There are, indeed, two excluding possibilities for $d_1$, namely $d_1=4$ and $d_1=5$. $d_1=4$ has been proven to be a sufficient condition for flatness in (\cite{MR94}).

Building on the results from Section \ref{6states}, this section demonstrates that for the complementary case $d_1=5$, the system is flat by pure prolongation under an additional assumption. The precise statement is as follows:

We consider the driftless linear-in-the-control system
\begin{equation}\label{sys5-3:eq}
\dot{x} = u_{1}^{(0)}g_{1}(x) + u_{2}^{(0)}g_{2}(x) + u_{3}^{(0)}g_{3}(x)
\end{equation}
with  $x \in X$, a smooth manifold of dimension 5, and $\rk{g_{1}(x), g_{2}(x), g_{3}(x)} = 3$ for all $x$ in a suitable open dense subset of $X$.
We set
\begin{equation}
\begin{aligned}
H_{0,3}&\triangleq \left\{g_{1}, g_{2}, g_{3}\right\},\\
H_{0,2}&\triangleq \left\{g_{1}, g_{2}\right\}, \quad H_{1,3} &\triangleq \left\{g_{1}, g_{2}, g_{3}, [g_{3},g_{1}], [g_{3},g_{2}]\right\}
\end{aligned}
\end{equation}
where, again, we have denoted by $g_{i}$ the vector field $\sum_{j=1}^{5}g_{i,j}\ddxx{j}$, $i=1,2,3$, for simplicity's sake.

\begin{thm}
\label{3-5theorem}
Assume that $\{g_1,g_2,g_3\}$ is not involutive.

\begin{enumerate}
  \item If the largest involutive subdistribution of $\{g_1,g_2,g_3\}$ is $\{g_1,g_2\}$, a necessary and sufficient condition for the system (\ref{sys5-3:eq}) to be flat by pure prolongation is
      \[ H_{1,3}=\left\{g_1,g_2,g_3,[g_{3},g_{1}], [g_{3},g_{2}]\right\} \mbox{ has rank $5$ } \]
      and the minimal prolongation  order is $\mbf{j}= (0,0,1)$. Moreover, the flat outputs are $y_1,y_2,y_3$ such that their differentials annihilate $g_1$ and $g_2$.
  \item If the largest involutive subdistribution of $\{g_1,g_2,g_3\}$ is $\{g_1\}$, a necessary and sufficient condition for the system (\ref{sys5-3:eq}) to be flat by pure prolongation is \\
\centerline{$H'_{1,2} =  \left\{g_{1}, g_{2}, [g_{2},g_{1}], [g_{3},g_{1}]\right\}$ involutive, with $\rk H' _{1,2} = 3$,}\\
\centerline{$\rk H_{2,3} = 5$}\\
\centerline{ with $H_{2,3} = \left\{g_{1}, g_{2}, g_{3}, [g_{2},g_{1}], [g_{3},g_{2}], [g_{3},[g_{3},g_{1}]], [g_{3},[g_{3},g_{2}]] \right\}$,}\\
 and the minimal prolongation order is $\mbf{j}= (0,1,2)$. \\
 Moreover, the flat outputs are $y_1,y_2,y_3$ such that $y_1,y_2$ are independent of the inputs and their differentials annihilate $H'_{1,2}$, while $y_3$ is independent of $u_1$ and $u_2$ and its differential annihilate $g_1$.
\end{enumerate}
\end{thm}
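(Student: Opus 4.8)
The plan is to leverage Theorem \ref{3-6theorem} as a template and reduce the five-state, three-input problem to the structure already analyzed for six states, applying the characterization of Theorem \ref{CNSP2:thm} directly. The two cases are dictated by the dichotomy on the largest involutive subdistribution of $\{g_1,g_2,g_3\}$, already justified in Theorem \ref{3-6theorem}, so I would first invoke that dichotomy and then handle each case separately.

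For case 1 (largest involutive subdistribution $\{g_1,g_2\}$), the initialization is \eqref{init1:eq}, so $j_1=j_2=0$ and only $u_3$ is prolonged. I would show that with the candidate prolongation $\mbf{j}=(0,0,1)$ the distributions $\Delta_k^{(\mbf{j})}$ and $\Gamma_k^{(\mbf{j})}$ from \eqref{Deltadef:eq} become, after the first few Lie brackets with $g_0^{(\mbf{j})}$, generated (on the $x$-part) by $H_{0,2}=\{g_1,g_2\}$ at level one and by $H_{1,3}=\{g_1,g_2,g_3,[g_3,g_1],[g_3,g_2]\}$ at the next level. Since $\dim X=5$, the rank condition $\rk H_{1,3}=5$ is exactly what is needed to satisfy Theorem \ref{CNSP2:thm}(iii) with $\rk\Delta_k^{(\mbf{j})}=n+m=8$ at $k_\star$; involutivity of $\Delta_k^{(\mbf{j})}$ (conditions (i),(ii)) follows because $H_{0,2}$ is involutive by hypothesis and the top-level distribution is the whole (prolonged) tangent space, hence trivially involutive. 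Conversely, I would argue that if $\rk H_{1,3}<5$ then $H_{1,3}=H_{0,3}$ would have rank $3$ or $4$; rank $3$ contradicts the accessibility/controllability needed for flatness, and rank $4$ would force $H_{1,3}$ (or its closure) to be a proper involutive distribution obstructing condition (iii) for \emph{any} prolongation order — this is the step requiring care, and I would mirror the argument used in the appendix proof of Theorem \ref{3-6theorem}, checking that no larger $j_3$ or reintroduction of $j_2$ can repair the rank deficiency. Finally, the PDE system \eqref{PDEsj:eq} with $G_0^{(\mbf{j})}$ annihilated shows the flat outputs' differentials annihilate $g_1,g_2$, and counting via $\rho_k^{(\mbf{j})}$ gives controllability indices consistent with $|\mbf{j}|=1$.

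For case 2 (largest involutive subdistribution $\{g_1\}$), the initialization is \eqref{init2:eq}, forcing $j_2\geq 1$, and the natural candidate is $\mbf{j}=(0,1,2)$ — the same order as in case 2 of Theorem \ref{3-6theorem}. Here I would observe that the relevant distributions are \emph{identical in form} to those in the six-state case, the only difference being the ambient dimension: the role of "$\rk H_{2,3}=6$" is played by "$\rk H_{2,3}=5$", and the involutivity-plus-rank-3 condition on $H'_{1,2}=\{g_1,g_2,[g_2,g_1],[g_3,g_1]\}$ is unchanged. So the cleanest route is to recast the appendix proof of Theorem \ref{3-6theorem}(2) in a dimension-agnostic way: verify that $\Delta_k^{(\mbf{j})}$ built from $g_0^{(\mbf{j})}, \ddu{1}{0},\ddu{2}{1},\ddu{3}{2}$ has $x$-part filtered by $H_{0,1}\subset H'_{1,2}\subset H_{2,3}$, that involutivity of $H'_{1,2}$ and $H_{0,1}$ gives (i)–(ii), and that $\rk H_{2,3}=5=n$ gives (iii) with the total rank reaching $n+m=8$. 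The flat-output structure follows from \eqref{PDEsj:eq}: $y_1,y_2$ annihilate $H'_{1,2}$ (hence are input-independent since the prolonged coordinates $u_2^{(0)}$ do not enter at the relevant level), while $y_3$ only needs to annihilate $g_1$ at the bottom level and is independent of $u_1,u_2$.

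The main obstacle I anticipate is the \textbf{converse direction in case 1}, specifically ruling out that a system with $\rk H_{1,3}=5$ replaced by a rank-$4$ configuration could still be P$^2$-flat via some nonminimal prolongation: one must show that prolonging further (larger $j_3$, or adding $j_2$) cannot increase the $x$-rank of the accessible distribution beyond what the Lie-bracket closure of $\{g_1,g_2,g_3\}$ already permits at "bracket-depth one with $g_3$," because the only new brackets available are $\ad_{g_0^{(\mbf{j})}}^k \ddu{i}{0}$, whose $x$-components are controlled by iterated brackets of $g_1,g_2,g_3$ and whose growth is capped when $\{g_1,g_2\}$ is involutive. This requires a careful bookkeeping of the filtration, essentially the same as the six-state analysis but with the degeneracy that the involutive closure $\ol{H_{0,3}}$ might already equal $TX$ when $d_1=5$; I would handle it by reducing to the six-state proof after noting that the five-state case is the "one bracket short of generic" situation and that $d_1=5$ (the complementary case to $d_1=4$ of \cite{MR94}) is exactly what makes the $H_{1,3}$ rank condition both necessary and sufficient.
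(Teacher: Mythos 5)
Your proposal follows essentially the same route as the paper: for each case you fix the candidate prolongation ($\mbf{j}=(0,0,1)$, resp. $\mbf{j}=(0,1,2)$), compute the filtration $\Delta_k^{(\mbf{j})}$, $\Gamma_k^{(\mbf{j})}$ of \eqref{Deltadef:eq}, observe that involutivity of $H_{0,2}$ (resp. of $H'_{1,2}$ with $\rk H'_{1,2}=3$) gives conditions (i)--(ii) of Theorem \ref{CNSP2:thm}, that $\rk H_{1,3}=5$ (resp. $\rk H_{2,3}=5$) gives the terminal rank $n+m=8$ in (iii), and finally obtain the flat outputs from \eqref{PDEsj:eq} via Frobenius — exactly the computations in the paper's proof. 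Two small points of comparison: the annihilation of $g_1,g_2$ in case 1 comes from $\left< G_{1}^{(\bj)}, dy_i\right>=0$, not $G_{0}^{(\bj)}$ (the latter only removes the input directions), a one-index slip that does not affect the argument; and your sketched converse for case 1 (arguing that $\rk H_{1,3}=4$ would obstruct condition (iii) for \emph{every} prolongation order because $H_{1,3}$ or its closure would be a proper involutive distribution) is not justified as stated — a rank-$4$, non-involutive $H_{1,3}$ does not by itself cap the growth of the brackets appearing for larger $j_3$ — but this goes beyond what the paper itself does: the paper's proof of Theorem \ref{3-5theorem} only carries out the sufficiency verification for the stated prolongation orders and does not run the exhaustive case analysis over $\mbf{j}$ that the appendix performs for the six-state theorem. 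So, measured against the paper's own proof, your attempt is correct and essentially identical in method; the necessity discussion you flag as delicate is additional material that would indeed require the appendix-style enumeration to be made rigorous.
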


\begin{proof}

\begin{enumerate}
  \item After adding an order one prolongation of $u_3$, the drift and the input vector fields  of the prolonged system are:

\begin{equation}\label{prolvecfields5states:eq}
\begin{aligned}
&g_{0}^{(\mbf{j})}= \left(u_{1}^{(0)}g_{1}(x) + u_{2}^{(0)}g_{2}(x) + u_{3}^{(0)}g_{3}(x)\right) +   u_{3}^{(1)}\ddu{3}{0}\\
&g_{1}^{(\mbf{j})}= \ddu{1}{0}, \qquad g_{2}^{(\mbf{j})}= \ddu{2}{0}, \qquad g_{3}^{(\mbf{j})}= \ddu{3}{1}.
\end{aligned}
\end{equation}

We compute the distributions (\ref{Deltadef:eq}) for $k=0,1,2$:
\begin{equation}
\begin{aligned}
 \Gamma_{0}^{(\mbf{j})} &\triangleq \left\{ \ddu{3}{1} \right\} \\
\Delta_{0}^{(\mbf{j})} &\triangleq \left\{ \ddu{1}{0}  , \ddu{2}{0} \right\}\\
\Gamma_{1}^{(\mbf{j})} &\triangleq \left\{ \ddu{3}{1} \right\} \\
\Delta_{1}^{(\mbf{j})} &\triangleq \left\{ \ddu{1}{0}  , \ddu{2}{0}, \ddu{3}{0}, g_1, g_2 \right\}\\
\Gamma_{2}^{(\mbf{j})} &\triangleq \left\{ \ddu{3}{1} \right\} \\
\Delta_{2}^{(\mbf{j})} &\triangleq \left\{ \ddu{1}{0}  , \ddu{2}{0}, \ddu{3}{0}, g_1, g_2,g_3,[g_0^{(\mbf{j})},g_1], [g_0^{(\mbf{j})},g_2] \right\}
\end{aligned}
\end{equation}

But, due to the involutivity of $H_{0,2}$,
\[ \Delta_{2}^{(\mbf{j})} =\left\{ \ddu{1}{0}  , \ddu{2}{0}, \ddu{3}{0}, g_1, g_2,g_3,[g_3,g_1], [g_3,g_2] \right\} \]
which has rank $8$ thanks to the hypothesis that the rank of $H_{1,3}$ is $5$.

It is a straightforward computation to check the necessary and sufficient conditions for P$^2$-flatness given in theorem \ref{CNSP2:thm}:
\begin{itemize}
\item[(i)] $[\Delta_{k}^{(\mbf{j})}, \Delta_{k}^{(\mbf{j})}] \subset \Delta_{k}^{(\mbf{j})}$ and $\rk \Delta_{k}^{(\mbf{j})}$  locally constant for all $k=0,1,2$,
\item[(ii)]$[\Gamma_{k}^{(\mbf{j})}, \Delta_{k}^{(\mbf{j})}] \subset \Delta_{k}^{(\mbf{j})}$ for all $k=0,1,2$,
\item[(iii)] $\rk \Delta_{2}^{(\mbf{j})} = n+m=8$.
\end{itemize}

Therefore, the system is P$^2$-flat with an order one prolongation of $u_3$ and the flat outputs $y_1,y_2,y_3$ are the solutions of the system of PDE's:

\[ \left< G_{1}^{(\bj)},  dy_{i} \right>= 0 \]

or, equivalently, the flat outputs must be independent of $u_1^{(0)},u_2^{(0)},u_3^{(0)}$ and must be the solution of the system of PDE's

\[ \left< H_{0,2},  dy_{i} \right>= 0 \]

Remark that, by the Frobenius theorem, there exists $y_1,y_2,y_3$ independent solutions of this system of linear partial differential equations since, by assumption, $H_{0,2}$ is an involutive distribution.

\item Let us consider now the case $\mbf{j}=(0,1,2)$. The distributions (\ref{Deltadef:eq}) for $k=0,1,2$ are:
\begin{equation}
\begin{aligned}
\Gamma_{0}^{(\mbf{j})} &= \left\{ \ddu{2}{1},\ddu{3}{2} \right\},\\
\Delta_{0}^{(\mbf{j})} &= \left\{\ddu{1}{0}\right\},\\
\Gamma_{1}^{(\mbf{j})} &= \left\{ \ddu{2}{1},\ddu{3}{2}, \ddu{3}{1} \right\},\\
\Delta_{1}^{(\mbf{j})} &= \left\{\ddu{1}{0}, \ddu{2}{0},  g_{1} \right\},\\
\Gamma_{2}^{(\mbf{j})} &= \Gamma_{1}^{(\mbf{j})}, \\
\Delta_{2}^{(\mbf{j})} &= \left\{\ddu{1}{0}, \ddu{2}{0}, \ddu{3}{0}, g_{1}, g_{2}, [g_{2},g_{1}]\right\}.
\end{aligned}
\end{equation}
are all involutive if, and only if, $H'_{1,2}$ is involutive with $\rk H'_{1,2} = 3$.  Moreover, we have $\rk \Delta_{2}^{(\mbf{j})} = 6$ and we have $\Gamma_{3}^{(\mbf{j})} = \Gamma_{1}^{(\mbf{j})}$ and $\Delta_{3}^{(\mbf{j})}=$
$$\left\{\ddu{1}{0}, \ddu{2}{0}, \ddu{3}{0}, g_{1}, g_{2}, g_{3}, [g_{2},g_{1}], [g_{3},g_{2}], [g_{3},[g_{3},g_{1}]], [g_{3},[g_{3},g_{2}]] \right\}.$$
Thus, $\rk H'_{1,2} = 3$ implies that  $\rk \Delta_{3}^{(\mbf{j})} = 8= n+m$ if $\rk H_{2,3} = 5$. \\
Hence, if $H'_{1,2}$ is involutive, with $\rk H' _{1,2} = 3$, and $\rk H_{2,3} = 5$, the system \eqref{3inputs} is P$^{2}$-flat with minimal prolongation $\mbf{j}=(0,1,2)$. The flat outputs $y_1,y_2$ are the solutions of the system of PDE's:

\[ \left< G_{2}^{(\bj)},  dy_{i} \right>= 0 \]

or, equivalently, the flat outputs $y_1,y_2$ must be independent of $u_1^{(0)},u_2^{(0)},u_3^{(0)}$ and must be the solution of the system of PDE's

\[ \left< H'_{1,2},  dy_{i} \right>= 0 \]

Again, by the Frobenius theorem, there exists $y_1,y_2$ independent solutions of this system of linear partial differential equations since, by assumption, $H'_{1,2}$ is an involutive distribution. Finally, the third flat output $y_3$ can be obtained as an independent function of $y_1,y_2$ satisfying the system of PDE's:

\[ \left< G_{1}^{(\bj)},  dy_{3} \right>= 0 \]

or, equivalently, $y_3$ must be independent of $u_1^{(0)},u_2^{(0)}$ and must be the solution of the system of PDE's

\[ \left< g_1,  dy_{i} \right>= 0 \]

\end{enumerate}

\end{proof}

{\bf Remark:} It was also proven in (\cite{MR95}) that any controllable driftless system of codimension 2 (that is to say, with $m$ inputs and $m+2$ states) is differentially flat if, and only if, the system is controllable. Hence, the importance of the above mentioned results relies on the fact that we prove that the system is flat by pure prolongation to deduce the differential flatness of the system.

\subsection{Example}

Consider the following example, borrowed from (\cite{sampei}):

\begin{equation*}
    \begin{pmatrix} \dot{x_1} \\ \dot{x_2}  \\ \dot{x_3}  \\ \dot{x_4}  \\ \dot{x_5}  \end{pmatrix}
        =
    u_1\begin{pmatrix} -\frac{1}{2} \\ 0 \\ 1 \\ 0 \\ 0  \end{pmatrix}
    +u_2\begin{pmatrix} 0 \\ \frac{-1}{2} \\ 0 \\ 1 \\ 0 \end{pmatrix}
    +u_3\begin{pmatrix} -\frac{1}{2}x_2\\ \frac{1}{2}x_1  \\ 0 \\ 0 \\1  \end{pmatrix}
\end {equation*}

It is easy to check that this example fits into the hypothesis of Theorem \ref{3-5theorem} since $[g_1,g_2] \in H_{0,2}=\left\{ g_{1}, g_{2}\right\}$ and the rank of $H_{1,3}$ is $5$. Therefore, the system is flat by pure prolongation by adding an order one prolongation of $u_3$. The flat outputs $y_1,y_2,y_3$ are the solution of the system of PDEs:

\[ \left< H_{0,2},  dy_{i} \right>= 0 \]

A possible solution for this system is:

\begin{equation*}
    y_1=x_5; \hspace{0.2cm} y_2=2x_1+x_3;\hspace{0.2cm} y_3=2x_2+x_4
\end{equation*}

To illustrate the definition of differential flatness, it is a straightforward computation that, after differentiating the above equations, one can write:
\begin{eqnarray*}
% \nonumber % Remove numbering (before each equation)
  x_1 &=& \frac{\dot{y}_3}{\dot{y}_1} \\
  x_2 &=& -\frac{\dot{y}_2}{\dot{y}_1} \\
  x_3 &=& y_2-2\frac{\dot{y}_3}{\dot{y}_1} \\
  x_4 &=& y_3+\frac{\dot{y}_2}{\dot{y}_1} \\
  x_5 &=& y_1 \\
   \end{eqnarray*}
which proves that all the states can be written as function of the flat outputs and their derivatives. The inputs $u_1,u_2,u_3$ are obtained by differentiating the above equations for $x_3,x_4$ and $x_5$ once. Hence, the system fulfills the definition of differential flatness given in the background section (see \eqref{flatout:def}).

\section{Generalization to Driftless Systems with $m$ Inputs and $2m-1$ or $2m$ States}
\label{m-inputs-section}

Consider the $m$-input system

\begin{equation}
\label{sys_m_inputs:eq}
\dot{x} = \sum_{i=1}^{m}g_i u_i\end{equation}     
with $x\in X$ a smooth manifold of dimension $2m-1$ or $2m$.

We set
\begin{equation}
\begin{aligned}
H_{0,2}&\triangleq \left\{g_{1},\dots, g_{m-1}\right\},\\
H_{2,3} &\triangleq \left\{g_{1}, \dots, g_{m}, [g_{m},g_{1}], \dots, [g_{m},g_{m-1}], [g_{m},[g_{m},g_{1}]], \dots [g_{m},[g_{m},g_{m-1}]] \right\}\\
H_{1,2} &\triangleq \left\{g_{1},\dots, g_{m-1}, [g_{m},g_{1}],\dots, [g_{m},g_{m-1}]\right\}\\
H'_{1,2} &\triangleq \left\{g_{1},\dots, g_{m-1}, g_m, [g_{m},g_{1}],\dots, [g_{m},g_{m-1}]\right\}
\end{aligned}
\end{equation}

The results from the previous section for $m=3$ can be extended to systems with an arbitrary number of inputs and $2m$ or $2m-1$ states. Based on the findings in Theorems \ref{3-6theorem} and \ref{3-5theorem}, we propose and prove the following theorem:

\begin{thm}
\label{m-inputs-theorem}
\begin{enumerate}
\item For the case $n=2m-1$, if $H_{0,2}$ is involutive and the rank of $H'_{1,2}$ is $2m-1$,  the system \eqref{sys_m_inputs:eq} is P$^{2}$-flat and the minimal prolongation order is $\mbf{j}= (0,\dots,0,1)$. Moreover, the flat outputs $y_1,\dots y_m$ can be computed such that their differentials annihilate $H_{0,2}$.

\item For the case $n=2m$, if $H_{0,2}$ is involutive, $H_{1,2}$ is involutive and of rank $2m-2$ and $H_{2,3}$ is the full space, the system \eqref{sys_m_inputs:eq} is P$^{2}$-flat and the minimal prolongation order is $\mbf{j}= (0,\dots,0,2)$. Moreover, the flat outputs $y_1,\dots,y_m$ are $m$ functions differentially independent such that the differentials of $y_1,y_2$ annihilate $H_{1,2}$ and the differentials of $y_3,\dots,y_{m}$ annihilate $H_{0,2}$.
\end{enumerate}
\end{thm}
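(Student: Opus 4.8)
The plan is to follow exactly the strategy that worked for $m=3$ in Theorems \ref{3-6theorem} and \ref{3-5theorem}, namely: fix the prolongation order $\mbf{j}$ suggested by the statement, compute the distributions $\Delta_k^{(\mbf{j})}$ and $\Gamma_k^{(\mbf{j})}$ of \eqref{Deltadef:eq} step by step, and verify the three conditions (i)--(iii) of Theorem \ref{CNSP2:thm}. The two cases are treated separately but in parallel, since the $n=2m$ case with $\mbf{j}=(0,\dots,0,2)$ literally mirrors case 2 of Theorem \ref{3-6theorem} and case 2 of Theorem \ref{3-5theorem} with $g_3$ replaced by $g_m$ and the roles of $g_1,g_2$ played collectively by $g_1,\dots,g_{m-1}$, while the $n=2m-1$ case with $\mbf{j}=(0,\dots,0,1)$ mirrors case 1 of Theorem \ref{3-5theorem}.

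\medskip
\noindent\textbf{Case 1 ($n=2m-1$, $\mbf{j}=(0,\dots,0,1)$).} After prolonging $u_m$ once, the prolonged vector fields are $g_0^{(\mbf{j})} = \sum_{i=1}^m u_i^{(0)} g_i + u_m^{(1)}\ddu{m}{0}$, $g_i^{(\mbf{j})}=\ddu{i}{0}$ for $i<m$, and $g_m^{(\mbf{j})}=\ddu{m}{1}$. One then computes $\Gamma_k^{(\mbf{j})}=\{\ddu{m}{1}\}$ for all $k$; $\Delta_0^{(\mbf{j})}=\{\ddu{1}{0},\dots,\ddu{m-1}{0}\}$; $\Delta_1^{(\mbf{j})}=\Delta_0^{(\mbf{j})}\oplus\{\ddu{m}{0}, g_1,\dots,g_{m-1}\}$ (here the bracket of $g_0^{(\mbf{j})}$ with $\ddu{m}{0}$ produces $\ddu{m}{-1}$-type terms and $g_m$ does not yet appear because $[g_0^{(\mbf{j})},\ddu{i}{0}]=-g_i$ for $i<m$, using that $j_m=1$ shifts the index); and $\Delta_2^{(\mbf{j})}$ adds $g_m$ together with $[g_0^{(\mbf{j})},g_i]$ for $i<m$, which by the involutivity of $H_{0,2}=\{g_1,\dots,g_{m-1}\}$ collapses to $\{[g_m,g_1],\dots,[g_m,g_{m-1}]\}$ modulo $H_{0,2}$. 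Hence $\Delta_2^{(\mbf{j})}$ has rank $(m-1)+1+\rk H'_{1,2} = (m-1)+1+(2m-1)=3m-1 = n+m$, using the hypothesis $\rk H'_{1,2}=2m-1$. Involutivity of $\Delta_k^{(\mbf{j})}$ for $k=0,1,2$ follows from involutivity of $H_{0,2}$ (for $k=1$ the added vector fields $\ddu{m}{0}, g_1,\dots,g_{m-1}$ commute among themselves up to elements of $H_{0,2}$), and $\rk\Delta_2^{(\mbf{j})}$ is maximal so $k_\star^{(\mbf{j})}\le 2 \le n+|\mbf{j}|$. Condition (ii), $[\Gamma_k^{(\mbf{j})},\Delta_k^{(\mbf{j})}]\subset\Delta_k^{(\mbf{j})}$, is immediate since $\ddu{m}{1}$ commutes with everything appearing in $\Delta_k^{(\mbf{j})}$. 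The flat outputs are then the $m$ independent solutions (Frobenius) of $\langle H_{0,2},dy_i\rangle=0$.

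\medskip
\noindent\textbf{Case 2 ($n=2m$, $\mbf{j}=(0,\dots,0,2)$).} After prolonging $u_m$ twice, one computes the $\Delta_k^{(\mbf{j})}$ and $\Gamma_k^{(\mbf{j})}$ for $k=0,1,2,3$ exactly as in the $m=3$ proof: $\Delta_0^{(\mbf{j})}=\{\ddu{1}{0},\dots,\ddu{m-1}{0}\}$ wait --- more precisely $\Delta_0^{(\mbf{j})}$ contains only $\ddu{i}{0}$ for the unprolonged inputs and $\ddu{m}{2}$ sits in $\Gamma$; one proceeds through $\Delta_1^{(\mbf{j})}$ (adds $\ddu{m}{0}$ and $g_1,\dots,g_{m-1}$-type terms... actually $g_1,\dots,g_{m-1}$), $\Delta_2^{(\mbf{j})}$ (adds $g_m$, $[g_m,g_1],\dots,[g_m,g_{m-1}]$, using involutivity of $H_{0,2}$, giving rank governed by $H_{1,2}$, which is assumed involutive of rank $2m-2$), and $\Delta_3^{(\mbf{j})}$ (adds $[g_m,[g_m,g_i]]$-type iterated brackets, collapsing modulo $H_{1,2}$ by its involutivity to the generators of $H_{2,3}$). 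The hypothesis $\rk H_{2,3}=2m$ (the full base-space dimension) together with the prolonged directions yields $\rk\Delta_3^{(\mbf{j})}=2m+m=n+m$, so (iii) holds with $k_\star^{(\mbf{j})}\le 3$. Involutivity of each $\Delta_k^{(\mbf{j})}$ reduces to the hypotheses that $H_{0,2}$ and $H_{1,2}$ are involutive; condition (ii) is again immediate because $\Gamma_k^{(\mbf{j})}$ is spanned by the $\ddu{m}{2},\ddu{m}{1}$ directions which commute with all of $\Delta_k^{(\mbf{j})}$. The flat outputs $y_1,y_2$ solve $\langle G_2^{(\mbf{j})},dy_i\rangle=0$, equivalently are input-independent with $\langle H_{1,2},dy_i\rangle=0$, and $y_3,\dots,y_m$ solve $\langle G_1^{(\mbf{j})},dy_i\rangle=0$, equivalently $\langle H_{0,2},dy_i\rangle=0$; existence of $m$ differentially independent such functions follows from Frobenius applied to the involutive $H_{1,2}\supset H_{0,2}$ and a dimension count on the codimensions.

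\medskip
\noindent\textbf{Main obstacle.} The routine parts are the bracket bookkeeping; the genuinely delicate point is verifying that the iterated Lie brackets $[g_0^{(\mbf{j})},\cdot]$ really do collapse, modulo the lower-order distributions, onto exactly the generators of $H_{1,2}$ and $H_{2,3}$ as claimed --- i.e. that no ``cross terms'' $[g_i,g_j]$ with $i,j<m$ or mixed terms involving the prolonged coordinates $u_m^{(p)}$ survive and spoil either the rank counts or the involutivity. This is precisely where the involutivity hypotheses on $H_{0,2}$ and $H_{1,2}$ must be used, and where one must be careful that the largest involutive subdistribution of $\{g_1,\dots,g_m\}$ is (up to permutation) $\{g_1,\dots,g_{m-1}\}$ rather than something smaller, so that the analogue of the second case of Theorems \ref{3-6theorem}--\ref{3-5theorem} is not the one in force. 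I would isolate this collapse as a lemma (``under involutivity of $H_{0,2}$, $\ad_{g_0^{(\mbf{j})}}^{\ell}\ddu{i}{0} \equiv (-1)^{?}\ad_{g_m}^{\ell-j_i}g_i \bmod \Delta_{\ell-1}^{(\mbf{j})}$'') and prove it by induction on $\ell$, after which both cases follow by the rank hypotheses and Theorem \ref{CNSP2:thm}.
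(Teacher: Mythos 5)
Your proposal follows essentially the same route as the paper's own proof: fix $\mbf{j}=(0,\dots,0,1)$ (resp. $(0,\dots,0,2)$), compute $\Gamma_k^{(\mbf{j})}$ and $\Delta_k^{(\mbf{j})}$ up to $k=2$ (resp. $k=3$), use involutivity of $H_{0,2}$ (resp. $H_{0,2}$ and $H_{1,2}$) and the rank hypotheses on $H'_{1,2}$ (resp. $H_{2,3}$) to verify conditions (i)--(iii) of Theorem \ref{CNSP2:thm}, and read off the flat outputs from the Brunovsk\'y indices via the PDE systems \eqref{PDEsj:eq} and Frobenius. The ``collapse'' concern you isolate is handled in the paper exactly as you suggest (involutivity of $H_{0,2}$, resp. $H_{1,2}$, absorbs the cross terms), and your worry about which subdistribution is the largest involutive one is not needed here, since the $m$-input statement is only a sufficiency claim.
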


\begin{proof}
\begin{enumerate}
  \item After adding an order one prolongation of $u_m$, the drift and the input vector fields  of the prolonged system are:

\begin{equation}\label{prolvecfields5states-minputs:eq}
\begin{aligned}
&g_{0}^{(\mbf{j})}= \left(u_{1}^{(0)}g_{1}(x) + \dots + u_{m}^{(0)}g_{m}(x)\right) +   u_{m}^{(1)}\ddu{m}{0}\\
&g_{i}^{(\mbf{j})}= \ddu{i}{0}, \quad \forall \, i=1,\dots,m-1 \qquad g_{m}^{(\mbf{j})}= \ddu{m}{1}
\end{aligned}
\end{equation}

We compute the distributions (\ref{Deltadef:eq}) for $k=0,1,2$:
\begin{equation}
\begin{aligned}
\Gamma_{0}^{(\mbf{j})} &= \left\{ \ddu{m}{1} \right\} \quad \Gamma_{i}^{(\mbf{j})}=\Gamma_{0}^{(\mbf{j})}, \, i=1,2 \\
\Delta_{0}^{(\mbf{j})} &= \left\{ \ddu{1}{0},\dots  , \ddu{m-1}{0} \right\}\\
\Delta_{1}^{(\mbf{j})} &= \left\{ \ddu{1}{0}, \dots  , \ddu{m}{0}, g_1,\dots, g_{m-1} \right\}\\
\Delta_{2}^{(\mbf{j})} &= \left\{ \ddu{1}{0}, \dots, \ddu{m}{0}, g_1, \dots ,g_m,[g_m,g_1],\dots, [g_m,g_{m-1}] \right\}
\end{aligned}
\end{equation}

Note that, due to the involutivity of $H_{0,2}$, $\Delta_{1}^{(\mbf{j})}$ is also involutive, while
$ \Delta_{2}^{(\mbf{j})} $ has rank $3m-1$ given the hypothesis that $H'_{1,2}$ has rank $2m-1$. Therefore, the sufficient
conditions for P$^2$-flatness stated in theorem \ref{CNSP2:thm} are satisfied:
\begin{itemize}
\item[(i)] $[\Delta_{k}^{(\mbf{j})}, \Delta_{k}^{(\mbf{j})}] \subset \Delta_{k}^{(\mbf{j})}$ and $\rk \Delta_{k}^{(\mbf{j})}$  locally constant for all $k=0,1,2$,
\item[(ii)]$[\Gamma_{k}^{(\mbf{j})}, \Delta_{k}^{(\mbf{j})}] \subset \Delta_{k}^{(\mbf{j})}$ for all $k=0,1,2$,
\item[(iii)] $\rk \Delta_{2}^{(\mbf{j})} = n+m=3m-1$.
\end{itemize}

Hence, the system is P$^2$-flat with an order one prolongation of $u_m$. In order to compute the flat outputs, the Brunovsk\'y controllability indices are computed:

$$\kappa_{k}^{(\bj)} \triangleq \# \{ l \mid \rho_{l}^{(\bj)} \geq k \}=3, \quad \forall \, k= 1,\ldots, m.$$

and, therefore, the flat outputs $y_1,\dots,y_{m}$ are the solutions of the system of PDE's:

\[ \left< G_{1}^{(\bj)},  dy_{i} \right>= 0 \]

or, equivalently, the flat outputs $y_1,\dots,y_{m}$ must be independent of $u_1^{(0)},\dots,u_m^{(0)}$ and must be the solution of the system of PDE's

\[ \left< H_{0,2},  dy_{i} \right>= 0 \]

which can be solved thanks to the involutivity of $H_{0,2}$ by application of the Frobenius theorem.

\item Let us consider now the case $n=2m$. The distributions (\ref{Deltadef:eq}) for $k=0,1,2,3$ are:

\begin{equation}
\begin{array}{rcl}
\Gamma_{0}^{(\mbf{j})} &=& \left\{ \ddu{m}{2} \right\} \\
\Gamma_{1}^{(\mbf{j})} &=& \left\{ \ddu{m}{2} , \ddu{m}{1} \right\} \\
\Gamma_{i}^{(\mbf{j})} &=& \Gamma_{1}^{(\mbf{j})}, \, i=2,3 \\
\Delta_{0}^{(\mbf{j})} &=& \left\{ \ddu{1}{0},\dots  , \ddu{m-1}{0} \right\}\\
\Delta_{1}^{(\mbf{j})} &=& \left\{ \ddu{1}{0}, \dots  , \ddu{m-1}{0}, g_1,\dots, g_{m-1} \right\}\\
\Delta_{2}^{(\mbf{j})} &=& \left\{ \ddu{1}{0}, \dots, \ddu{m}{0}, g_1, \dots ,g_{m-1},[g_m,g_1],\dots, [g_m,g_{m-1}] \right\}\\
\Delta_{3}^{(\mbf{j})} &=& \left\{ \ddu{1}{0}, \dots, \ddu{m}{0}, g_1, \dots ,g_{m}, \right. \\
 &&[g_m,g_1],\dots, [g_m,g_{m-1}], [g_m,[g_m,g_1]],\dots, [g_m,[g_m,g_{m-1}]] \left. \right\}
\end{array}
\end{equation}

Hence, $\Delta_{1}^{(\mbf{j})}$ is involutive thanks to the involutivity of $H_{0,2}$, $\Delta_{2}^{(\mbf{j})}$ is involutive due to the involutivity of $H_{1,2}$, and $\Delta_{3}^{(\mbf{j})}$ has rank $3m$ since, by hypothesis, $H_{2,3}$ has rank $2m$. Summarizing, the
conditions for P$^2$-flatness given in theorem \ref{CNSP2:thm} are fulfilled.

Let us compute again the Brunovsk\'y controllability indices:

$$\kappa_{k}^{(\bj)}=4, \quad \forall \, k= 1,2  \quad \kappa_{k}^{(\bj)}=3 \quad \forall \, k= 3,\dots,m.$$
 So, the flat outputs $y_1,y_2$ are the solutions of the system of PDE's:

\[ \left< G_{2}^{(\bj)},  dy_{i} \right>= 0 \]

or, equivalently, they must be independent of the inputs and their differentials must annihilate $H_{1,2}$. On the other hand, the flat outputs $y_3,\dots,y_m$ must be independent of the inputs $u_1,\dots,u_{m-1}$ and their differentials must annihilate $H_{0,2}$. Again, this system of equations can be solved due to the involutivity of $H_{0,2}$ and $H_{1,2}$.

\end{enumerate}
\end{proof}

\section{Conclusions}

A sufficient condition has been established for determining whether a three-input driftless system with five or six states is flat by pure prolongation. This condition requires verifying the involutivity of some distributions along with certain rank conditions. For the six dimension case, since these sufficient conditions differ from existing ones, our results not only provide criteria for a system to be flat by pure prolongation but also expand the class of systems known to be differentially flat.

The results derived for the three-input case have been extended to $m$-input systems with $2m$ or $2m-1$ states. Once again, the conditions remain relatively lenient, requiring only the verification of the involutivity of certain distributions and specific rank conditions. These findings provide sufficient conditions for an $m$-input system to be flat by pure prolongation, and consequently, they also serve as a sufficient condition for differential flatness.

Future work may explore general systems with drift or systems with an arbitrary number of inputs and states.

\section{Appendix: Proof of Theorem \ref{3-6theorem}}

\begin{proof}
For every $k$, we compute the distributions \eqref{Deltadef:eq} for all $\mbf{j}$ up to $k_{\star}^{(\mbf{j})}$, the index at which the rank of $\Delta_{k}^{(\mbf{j})}$ reaches $n+m$.

We start proving the result in the first case \eqref{init1:eq}, namely if the largest involutive subdistribution of $H_{0,3}$ is, up to a suitable input permutation, $H_{0,2}$  \ie:
$$
H_{0,2} = \ol{H_{0,2}}, \quad H_{0,3} \neq \ol{H_{0,3}}
$$
that corresponds to $j_{1} = j_{2}=0$ and $j_3 \geq 1$.

\begin{enumerate}
\item[\framebox{\Large{$\mathbf{k=0}$}}]
For all $\underline{\mathbf{j_{3}\geq 1}}$,
$$\Gamma_{0}^{(\mbf{j})} = \left\{\ddu{3}{j_{3}}\right\}, \quad \Delta_{0}^{(\mbf{j})} = \left\{\ddu{1}{0}, \ddu{2}{0}\right\}.$$
$\Delta_{0}^{(\mbf{j})}$ is involutive, $\rk \Delta_{0}^{(\mbf{j})}= 2$ and $[\Gamma_{0}^{(\mbf{j})}, \Delta_{0}^{(\mbf{j})}] = \{ 0 \} \subset \Delta_{0}^{(\mbf{j})}$.

%%%%%
\item[\framebox{\Large{$\mathbf{k=1}$}}]
%%%%%

If $\underline{\mathbf{j_{3} = 1}}$
$$\Gamma_{1}^{(\mbf{j})} = \left\{\ddu{3}{1} \right\}, \quad \Delta_{1}^{(\mbf{j})} = \left\{\ddu{1}{0}, \ddu{2}{0}, \ddu{3}{0}, g_{1}, g_{2} \right\}.$$
$\Delta_{1}^{(\mbf{j})}$ is involutive if, and only if, $H_{0,2}$ is involutive, $\rk \Delta_{1}^{(\mbf{j})}= 5$ and $[\Gamma_{1}^{(\mbf{j})}, \Delta_{1}^{(\mbf{j})}] = \{ 0 \} \subset \Delta_{1}^{(\mbf{j})}$.
\paragraph{If $\underline{\mathbf{ j_{3} \geq 2}}$}
$$\Gamma_{1}^{(\mbf{j})} = \left\{\ddu{3}{j_{3}}, \ddu{3}{j_{3}-1} \right\}, \quad \Delta_{1}^{(\mbf{j})} = \left\{\ddu{1}{0}, \ddu{2}{0},  g_{1}, g_{2} \right\}.$$
Again, $\Delta_{1}^{(\mbf{j})}$ is involutive if, and only if, $H_{0,2}$ is involutive, $\rk \Delta_{1}^{(\mbf{j})}= 4$ and

$[\Gamma_{1}^{(\mbf{j})}, \Delta_{1}^{(\mbf{j})}] = \{ 0 \} \subset \Delta_{1}^{(\mbf{j})}$.

%%%%%
\item[\framebox{\Large{$\mathbf{k=2}$}}]
%%%%%
~
\paragraph{If $\underline{\mathbf{j_{3} =1}}$}

$$\begin{array}{l}
\Gamma_{2}^{(\mbf{j})} = \left\{ \ddu{3}{1} \right\}, \qquad \Delta_{2}^{(\mbf{j})} = \\
\left\{\ddu{1}{0}, \ddu{2}{0}, \ddu{3}{0}, g_{1}, g_{2}, g_{3}, u_{2}^{(0)}[g_{2},g_{1}]+ u_{3}^{(0)}[g_{3},g_{1}], u_{1}^{(0)}[g_{1}, g_{2}] +  u_{3}^{(0)}[g_{3},g_{2}]\right\}. 
\end{array}$$
%$$\begin{aligned}
%&\Gamma_{2}^{(\mbf{j})} = \left\{ \ddu{3}{1} \right\}, \\
%&\Delta_{2}^{(\mbf{j})} = \left\{\ddu{1}{0}, \ddu{2}{0}, \ddu{3}{0}, g_{1}, g_{2}, g_{3}, u_{2}^{(0)}[g_{2},g_{1}]+ u_{3}^{(0)}[g_{3},g_{1}], u_{1}^{(0)}[g_{1}, g_{2}] +  u_{3}^{(0)}[g_{3},g_{2}]\right\}.
%\end{aligned}$$
$\Delta_{2}^{(\mbf{j})}$ is involutive if, and only if, $H_{1,3}$ is involutive with
$4\leq \rk H_{1,3} \leq 5$ (condition \eqref{init1:eq} implies that $\rk H_{1,3} \geq \rk H_{0,3} \geq 4$), in which case
$$\Delta_{2}^{(\mbf{j})} = \left\{\ddu{1}{0}, \ddu{2}{0}, \ddu{3}{0}, g_{1}, g_{2}, g_{3}, [g_{3},g_{1}], [g_{3},g_{2}]\right\} = \ol{\Delta_{2}^{(\mbf{j})} }, \quad 7 \leq \rk \Delta_{2}^{(\mbf{j})} \leq 8,$$
and $[\Gamma_{2}^{(\mbf{j})}, \Delta_{2}^{(\mbf{j})}] = \{ 0 \} \subset \Delta_{2}^{(\mbf{j})}.$
\paragraph{If $\underline{\mathbf{j_{3} =2}}$}
$$\begin{aligned}
&\Gamma_{2}^{(\mbf{j})} = \left\{  \ddu{3}{2}, \ddu{3}{1} \right\}, \\
&\Delta_{2}^{(\mbf{j})} = \left\{\ddu{1}{0}, \ddu{2}{0}, \ddu{3}{0}, g_{1}, g_{2},  u_{2}^{(0)}[g_{2},g_{1}]+ u_{3}^{(0)}[g_{3},g_{1}], u_{1}^{(0)}[g_{1}, g_{2}] +  u_{3}^{(0)}[g_{3},g_{2}]\right\}.
\end{aligned}$$
$\Delta_{2}^{(\mbf{j})}$ is involutive if, and only if, $H_{1,2}$ is involutive with
 $3 \leq \rk H_{1,2} \leq 4$ since, $H_{0,2}$ being involutive, $\rk H_{1,2}=2$ would yield $ [g_{3},g_{1}], [g_{3},g_{2}] \in H_{0,2}$ and then every other bracket generated by $g_{1}, g_{2}, g_{3}$ would also belong to $H_{0,2}$, which would contradict the strong controllability condition (iii).
We thus get
$$\begin{aligned}
&\Delta_{2}^{(\mbf{j})} = \left\{\ddu{1}{0}, \ddu{2}{0}, \ddu{3}{0}, g_{1}, g_{2}, [g_{3},g_{1}], [g_{3},g_{2}]\right\} = \ol{\Delta_{2}^{(\mbf{j})}}, \;\\ &6 \leq \rk \Delta_{2}^{(\mbf{j})} \leq 7,\; [\Gamma_{2}^{(\mbf{j})}, \Delta_{2}^{(\mbf{j})}] = \{ 0 \} \subset \Delta_{2}^{(\mbf{j})}.
\end{aligned}$$
\paragraph{If $\underline{\mathbf{j_{3} \geq 3}}$}
$$\begin{aligned}
&\Gamma_{2}^{(\mbf{j})} = \left\{  \ddu{3}{j_{3}}, \ddu{3}{j_{3}-1}, \ddu{3}{j_{3}-2} \right\}, \\
&\Delta_{2}^{(\mbf{j})} = \left\{\ddu{1}{0}, \ddu{2}{0}, g_{1}, g_{2},  u_{2}^{(0)}[g_{2},g_{1}]+ u_{3}^{(0)}[g_{3},g_{1}], u_{1}^{(0)}[g_{1}, g_{2}] +  u_{3}^{(0)}[g_{3},g_{2}]\right\}.
\end{aligned}$$
Again, $\Delta_{2}^{(\mbf{j})}$ is involutive if, and only if, $H_{1,2}$ is involutive with  $3 \leq \rk H_{1,2} \leq 4$, in which case
$$\Delta_{2}^{(\mbf{j})} = \left\{\ddu{1}{0}, \ddu{2}{0},  g_{1}, g_{2}, [g_{3},g_{1}], [g_{3},g_{2}]\right\}  = \ol{\Delta_{2}^{(\mbf{j})}}, \quad 5\leq \rk \Delta_{2}^{(\mbf{j})} \leq 6$$
$$[\Gamma_{2}^{(\mbf{j})}, \Delta_{2}^{(\mbf{j})}] = \{ 0 \} \subset \Delta_{2}^{(\mbf{j})}.$$

\item[\framebox{\Large{$\mathbf{k=3}$}}]

If $\underline{\mathbf{j_{3}=1}}$
we have proven so far that, for $\mbf{j}=(0,0,1)$,
$\Delta_{1}^{(\mbf{j})}$ is involutive if, and only if, $H_{0,2}$ is involutive, with $\rk \Delta_{1}^{(\mbf{j})}= 5$,
and that
$\Delta_{2}^{(\mbf{j})}$ is involutive if, and only if, $H_{1,3}$ is involutive with
$4\leq \rk H_{1,3} \leq 5$.
$$\Delta_{2}^{(\mbf{j})} = \left\{\ddu{1}{0}, \ddu{2}{0}, \ddu{3}{0}, g_{1}, g_{2}, g_{3}, [g_{3},g_{1}], [g_{3},g_{2}]\right\} = \ol{\Delta_{2}^{(\mbf{j})} },$$
$7 \leq \rk \Delta_{2}^{(\mbf{j})} \leq 8,$ 
and $[\Gamma_{2}^{(\mbf{j})}, \Delta_{2}^{(\mbf{j})}] = \{ 0 \} \subset \Delta_{2}^{(\mbf{j})}.$

But the involutivity of $H_{0,2}$ and $H_{1,3}$ clearly implies that all the brackets $[g_{p},[g_{q},g_{r}]]$, $p,q,r = 1,2,3$, are in $\ol{\Delta_{2}^{(\mbf{j})}} = \Delta_{2}^{(\mbf{j})}$. Therefore $\Delta_{k}^{(\mbf{j})} = \Delta_{2}^{(\mbf{j})}$ for all $k\geq 3$ and $7 \leq \rk \Delta_{k}^{(\mbf{j})} \leq 8 <  n+m = 9$ for all $k\geq 3$. Hence $j_{3}>1$.
\paragraph{If $\underline{\mathbf{j_{3}= 2}}$}
We have proven so far that, for $\mbf{j}=(0,0,2)$,
$$\begin{aligned}
\Delta_{0}^{(\mbf{j})} &= \left\{\ddu{1}{0}, \ddu{2}{0}\right\},\\
\Delta_{1}^{(\mbf{j})} &= \left\{\ddu{1}{0}, \ddu{2}{0},  g_{1}, g_{2} \right\},\\
\Delta_{2}^{(\mbf{j})} &= \left\{\ddu{1}{0}, \ddu{2}{0}, \ddu{3}{0}, g_{1}, g_{2}, [g_{3},g_{1}], [g_{3},g_{2}]\right\}.
\end{aligned}$$
are all involutive if, and only if, $H_{1,2}$ and $H_{0,2}$  are involutive with 
\newline
$3 \leq \rk H_{1,2} \leq 4$ and $\rk \Delta_{2}^{(\mbf{j})} \leq 7$.

Moreover, we have 
\newline
$\Gamma_{3}^{(\mbf{j})} = \Gamma_{2}^{(\mbf{j})},$
$$\Delta_{3}^{(\mbf{j})} = \left\{\ddu{1}{0}, \ddu{2}{0}, \ddu{3}{0}, g_{1}, g_{2}, g_{3},[g_{3},g_{1}], [g_{3},g_{2}], [g_{3},[g_{3},g_{1}]], [g_{3},[g_{3},g_{2}]] \right\}.$$
Thus, if  $\rk H_{1,2} = 4$, which implies that $\rk \Delta_{2}^{(\mbf{j})} = 7$,  we get $\rk \Delta_{3}^{(\mbf{j})} = 9= n+m$. \\
Hence,  $\mbf{j} = (0,0,2)$ is such that
$$\Gamma_{k}^{(\mbf{j})} = \Gamma_{3}^{(\mbf{j})} , \quad \Delta_{k}^{(\mbf{j})} =\Delta_{3}^{(\mbf{j})} \quad \forall k\geq 3 = k_{\star}^{(\mbf{j})}$$
and $\Delta_{k}^{(\mbf{j})}$ is involutive with constant rank  and invariant by $\Gamma_{k}^{(\mbf{j})}$ for all $k$,
which proves that, if \eqref{init1:eq} holds true, $H_{1,2}$ and $H_{0,2}$  are involutive with $\rk H_{1,2} = 4$, the system \eqref{3inputs} is P$^{2}$-flat with minimal prolongation $\mbf{j}=(0,0,2)$. Moreover, it is easily verified that
$$\rho_{0}^{(0,0,2)}= 3, \quad \rho_{1}^{(0,0,2)}= 3, \quad \rho_{2}^{(0,0,2)}= 3, \quad \rho_{3}^{(0,0,2)}= 2$$
and
$$\kappa_{1}^{(0,0,2)}= 4, \quad \kappa_{2}^{(0,0,2)}= 4, \quad \kappa_{3}^{(0,0,2)}= 3$$
which proves that the equivalent linear system is
\begin{equation}\label{linequiv1:eq}
y_{1}^{(4)}= v_{1}, \quad y_{2}^{(4)}= v_{2}, \quad y_{3}^{(3)}= v_{3}.
\end{equation}

\end{enumerate}
\bigskip

We now prove the result in the second case \eqref{init2:eq}, namely if the largest involutive subdistribution of $H_{0,3}$ is, up to a suitable input permutation, $H_{0,1}$, that corresponds to $j_{1} = j_{2}=0$ and $j_3 \geq 1$.

%, with:
%\centerline{$H'_{1,2}$ involutive, $\rk H' _{1,2} = 3$,}\\
%\centerline{$\rk H_{2,3} = 6$,}\\
% and the minimal prolongation length is $\mbf{j}= (0,1,2)$

\begin{enumerate}

\item[\framebox{\Large{$\mathbf{k=0}$}}]
%%%%%
For all $\underline{\mathbf{j_{2}\geq 1,~ j_{3} \geq 1}}$,
$$\Gamma_{0}^{(\mbf{j})} = \left\{\ddu{2}{j_{2}}, \ddu{3}{j_{3}}\right\}, \quad \Delta_{0}^{(\mbf{j})} = \left\{\ddu{1}{0}\right\}.$$
$\Delta_{0}^{(\mbf{j})}$ is involutive, $\rk \Delta_{0}^{(\mbf{j})}= 1$ and $[\Gamma_{0}^{(\mbf{j})}, \Delta_{0}^{(\mbf{j})}] = \{ 0 \} \subset \Delta_{0}^{(\mbf{j})}$.
%%%%%

\item[\framebox{\Large{$\mathbf{k=1}$}}]
If $\underline{\mathbf{ j_{2}=1, ~j_{3} = 1}}$

$$\Gamma_{1}^{(\mbf{j})} = \left\{\ddu{2}{1}, \ddu{3}{1} \right\}, \quad \Delta_{1}^{(\mbf{j})} = \left\{\ddu{1}{0}, \ddu{2}{0}, \ddu{3}{0}, g_{1} \right\}.$$
$\Delta_{1}^{(\mbf{j})}$ is involutive, $\rk \Delta_{1}^{(\mbf{j})}= 4$ and $[\Gamma_{1}^{(\mbf{j})}, \Delta_{1}^{(\mbf{j})}] = \{ 0 \} \subset \Delta_{1}^{(\mbf{j})}$.
\paragraph{If $\underline{\mathbf{ j_{2}=1, ~j_{3}\geq 2}}$}
$$\Gamma_{1}^{(\mbf{j})} = \left\{\ddu{2}{1}, \ddu{3}{j_{3}}, \ddu{3}{j_{3}-1}\right\}, \quad \Delta_{1}^{(\mbf{j})} = \left\{\ddu{1}{0},\ddu{2}{0}, g_{1} \right\}.$$
$\Delta_{1}^{(\mbf{j})}$ is involutive, $\rk \Delta_{1}^{(\mbf{j})}= 3$ and $[\Gamma_{1}^{(\mbf{j})}, \Delta_{1}^{(\mbf{j})}] = \{ 0 \} \subset \Delta_{1}^{(\mbf{j})}$.
%%%%%
\paragraph{If $\underline{\mathbf{ j_{2}\geq 2, ~j_{3}\geq 2}}$}
%%%%%
$$\Gamma_{1}^{(\mbf{j})} = \left\{\ddu{2}{j_{2}}, \ddu{2}{j_{2}-1}, \ddu{3}{j_{3}}, \ddu{3}{j_{3}-1}\right\}, \quad \Delta_{1}^{(\mbf{j})} = \left\{\ddu{1}{0}, g_{1}\right\}.$$
$\Delta_{1}^{(\mbf{j})}$ is involutive, $\rk \Delta_{1}^{(\mbf{j})}= 2$ and $[\Gamma_{1}^{(\mbf{j})}, \Delta_{1}^{(\mbf{j})}] = \{ 0 \} \subset \Delta_{1}^{(\mbf{j})}$.
%%%%%

\item[\framebox{\Large{$\mathbf{k=2}$}}]

If $\underline{\mathbf{j_{2}=1,~ j_{3} =1}}$
$$\begin{aligned}
&\Gamma_{2}^{(\mbf{j})} = \left\{ \ddu{2}{1}, \ddu{3}{1} \right\}, \\
&\Delta_{2}^{(\mbf{j})} = \left\{\ddu{1}{0}, \ddu{2}{0}, \ddu{3}{0}, g_{1}, g_{2}, g_{3},  u_{2}^{(0)}[g_{2},g_{1}]+ u_{3}^{(0)}[g_{3},g_{1}] \right\}.
\end{aligned}$$
$\Delta_{2}^{(\mbf{j})}$ is involutive if, and only if, $H_{1,3}$ is involutive, with $\rk H_{1,3} = 4$. To prove the latter claim, recall that $j_{2}=1$ corresponds to the assumption \eqref{init2:eq}, so that  $[g_{2},g_{1}]$ and $[g_{3},g_{1}]$ are not elements of  $H_{0,3}$. But since $H_{0,3}$ is not involutive and its rank is 3, at least one Lie bracket $[g_{2},g_{1}]$ or $[g_{3},g_{1}]$ must belong to $H_{1,3}$, which proves that  $\rk H_{1,3} = 4$.

Thus
$$\Delta_{2}^{(\mbf{j})} = \left\{ \begin{array}{l}
\left\{\ddu{1}{0}, \ddu{2}{0}, \ddu{3}{0},  g_{1}, g_{2}, g_{3}, [g_{2},g_{1}]\right\}\\
\mathrm{or}\\
\left\{\ddu{1}{0}, \ddu{2}{0}, \ddu{3}{0},  g_{1}, g_{2}, g_{3}, [g_{3},g_{1}]\right\}
\end{array}\right.
= \ol{\Delta_{2}^{(\mbf{j})}},
$$
and
$$ \rk \Delta_{2}^{(\mbf{j})} =7, \; [\Gamma_{2}^{(\mbf{j})}, \Delta_{2}^{(\mbf{j})}] = \{ 0 \} \subset \Delta_{2}^{(\mbf{j})}.$$

\paragraph{If $\underline{\mathbf{j_{2}=1,~ j_{3} =2}}$}
$$\begin{aligned}
&\Gamma_{2}^{(\mbf{j})} = \left\{ \ddu{2}{1}, \ddu{3}{2}, \ddu{3}{1} \right\}, \\
&\Delta_{2}^{(\mbf{j})} = \left\{\ddu{1}{0}, \ddu{2}{0}, \ddu{3}{0}, g_{1}, g_{2},  u_{2}^{(0)}[g_{2},g_{1}]+ u_{3}^{(0)}[g_{3},g_{1}] \right\}.
\end{aligned}$$
$\Delta_{2}^{(\mbf{j})}$ is involutive if $H'_{1,2}$ is involutive, with $\rk H'_{1,2}  = 3$ (recall that $j_{2}=1$ corresponds to the assumption \eqref{init2:eq} with $H_{0,2}$ non involutive which implies that $g_{1}, g_{2}$ and $[g_{2},g_{1}]$ are independent and thus that $[g_{3},g_{1}] \in \left\{g_{1}, g_{2}, [g_{2},g_{1}]\right\}$), in which case
$$\begin{aligned}&\Delta_{2}^{(\mbf{j})} = \left\{\ddu{1}{0}, \ddu{2}{0}, \ddu{3}{0},  g_{1}, g_{2}, [g_{2},g_{1}]\right\} = \ol{\Delta_{2}^{(\mbf{j})}}, \;  \rk \Delta_{2}^{(\mbf{j})} = 6, \\  &[\Gamma_{2}^{(\mbf{j})}, \Delta_{2}^{(\mbf{j})}] = \{ 0 \} \subset \Delta_{2}^{(\mbf{j})}.\end{aligned}$$

\paragraph{If $\underline{\mathbf{j_{2}=1,~ j_{3} \geq 3}}$}

$$\begin{aligned}
&\Gamma_{2}^{(\mbf{j})} = \left\{ \ddu{2}{1}, \ddu{3}{j_{3}}, \ddu{3}{j_{3}-1}, \ddu{3}{j_{3}-2} \right\}, \\
&\Delta_{2}^{(\mbf{j})} = \left\{\ddu{1}{0}, \ddu{2}{0}, g_{1}, g_{2},  u_{2}^{(0)}[g_{2},g_{1}]+ u_{3}^{(0)}[g_{3},g_{1}] \right\}.
\end{aligned}$$
As in the previous case, we deduce that
$$\begin{aligned}&\Delta_{2}^{(\mbf{j})} = \left\{\ddu{1}{0}, \ddu{2}{0},  g_{1}, g_{2}, [g_{2},g_{1}]\right\} = \ol{\Delta_{2}^{(\mbf{j})}}, \; \rk \Delta_{2}^{(\mbf{j})} = 5, \\ &[\Gamma_{2}^{(\mbf{j})}, \Delta_{2}^{(\mbf{j})}] = \{ 0 \} \subset \Delta_{2}^{(\mbf{j})}.\end{aligned}$$

%\item[\framebox{\Large{$\mathbf{k=2}$}}]

\paragraph{If $\underline{\mathbf{j_{2}=2,~j_{3} = 2}}$}
$$\begin{aligned}
&\Gamma_{2}^{(\mbf{j})} = \left\{ \ddu{2}{2},  \ddu{2}{1}, \ddu{3}{2}, \ddu{3}{1} \right\}, \\
&\Delta_{2}^{(\mbf{j})} = \left\{\ddu{1}{0}, \ddu{2}{0}, \ddu{3}{0}, g_{1},  u_{2}^{(0)}[g_{2},g_{1}]+ u_{3}^{(0)}[g_{3},g_{1}] \right\}.
\end{aligned}$$
$\Delta_{2}^{(\mbf{j})}$ is involutive if, and only if, $H_{1,1}$ is involutive, with $\rk H_{2,1} = 2$, in which case
$$\begin{aligned}&\Delta_{2}^{(\mbf{j})} = \left\{\ddu{1}{0}, \ddu{2}{0}, \ddu{3}{0},  g_{1}, [g_{2},g_{1}], [g_{3},g_{1}]\right\}= \ol{\Delta_{2}^{(\mbf{j})}}, \ \rk \Delta_{2}^{(\mbf{j})} = 5, \\ &[\Gamma_{2}^{(\mbf{j})}, \Delta_{2}^{(\mbf{j})}] = \{ 0 \} \subset \Delta_{2}^{(\mbf{j})}.\end{aligned}$$
\paragraph{If $\underline{\mathbf{j_{2}=2,~j_{3} \geq 3}}$}

$$\begin{aligned}
&\Gamma_{2}^{(\mbf{j})} = \left\{ \ddu{2}{2},  \ddu{2}{1}, \ddu{3}{j_{3}}, \ddu{3}{j_{3}-1}, \ddu{3}{j_{3}-2}  \right\}, \\
&\Delta_{2}^{(\mbf{j})} = \left\{\ddu{1}{0}, \ddu{2}{0}, g_{1},  u_{2}^{(0)}[g_{2},g_{1}]+ u_{3}^{(0)}[g_{3},g_{1}] \right\}.
\end{aligned}$$
As in the previous case, we deduce that
$\Delta_{2}^{(\mbf{j})}$ is involutive if, and only if, $H_{1,1}$ is involutive, with $\rk H_{1,1} = 2$, in which case
$$\begin{aligned}&\Delta_{2}^{(\mbf{j})} = \left\{\ddu{1}{0}, \ddu{2}{0},  g_{1}, [g_{2},g_{1}], [g_{3},g_{1}]\right\},= \ol{\Delta_{2}^{(\mbf{j})}},  \quad  \rk \Delta_{2}^{(\mbf{j})} = 4, \\ &[\Gamma_{2}^{(\mbf{j})}, \Delta_{2}^{(\mbf{j})}] = \{ 0 \} \subset \Delta_{2}^{(\mbf{j})}.\end{aligned}$$

\paragraph{If $\underline{\mathbf{j_{3} \geq j_{2}\geq 3}}$}
$$\begin{aligned}
&\Gamma_{2}^{(\mbf{j})} = \left\{ \ddu{2}{j_{2}},  \ddu{2}{j_{2}-1}, \ddu{2}{j_{2}-2}, \ddu{3}{j_{3}}, \ddu{3}{j_{3}-1}, \ddu{3}{j_{3}-2}  \right\}, \\
&\Delta_{2}^{(\mbf{j})} = \left\{\ddu{1}{0},  g_{1},  u_{2}^{(0)}[g_{2},g_{1}]+ u_{3}^{(0)}[g_{3},g_{1}] \right\}.
\end{aligned}$$
As before, $\Delta_{2}^{(\mbf{j})}$ is involutive if, and only if, $H_{1,1}$ is involutive, with $\rk H_{1,1} =2$, in which case
$$\Delta_{2}^{(\mbf{j})} = \left\{\ddu{1}{0}, g_{1}, [g_{2},g_{1}], [g_{3},g_{1}]\right\}= \ol{\Delta_{2}^{(\mbf{j})}}, \quad \rk \Delta_{2}^{(\mbf{j})} = 3,$$
with $[\Gamma_{2}^{(\mbf{j})}, \Delta_{2}^{(\mbf{j})}] = \{ 0 \} \subset \Delta_{2}^{(\mbf{j})}.$

\item[\framebox{\Large{$\mathbf{k=3}$}}]
If $\underline{\mathbf{ j_{2}=1,~j_{3}= 1}}$
We have proven so far that, for $\mbf{j}=(0,1,1)$, and \eqref{init2:eq},
$$\begin{aligned}
\Delta_{0}^{(\mbf{j})} &= \left\{\ddu{1}{0}\right\},\\
\Delta_{1}^{(\mbf{j})} &= \left\{\ddu{1}{0}, \ddu{2}{0}, \ddu{3}{0}, g_{1} \right\},\\
\Delta_{2}^{(\mbf{j})} &=\left\{ \begin{array}{l}
 \left\{\ddu{1}{0}, \ddu{2}{0}, \ddu{3}{0}, g_{1}, g_{2}, g_{3}, [g_{2},g_{1}]\right\}\\
 \mathrm{or}\\
  \left\{\ddu{1}{0}, \ddu{2}{0}, \ddu{3}{0}, g_{1}, g_{2}, g_{3}, [g_{3},g_{1}]\right\}
  \end{array}\right.
\end{aligned}$$
are all involutive if, and only if, $H_{1,3}$ is involutive with $\rk H_{1,3} = 4$ and we have $\rk \Delta_{2}^{(\mbf{j})} = 7$.

Since $H_{1,3}$ is involutive, it is easily verified that all the second order brackets $[g_{p},[g_{q},g_{r}]] \in H_{1,3}$, for all $p,q,r=1,2,3$, which proves that $\Delta_{3}^{(\mbf{j})} =\Delta_{2}^{(\mbf{j})}$, hence $\rk \Delta_{3}^{(\mbf{j})} = \rk \Delta_{2}^{(\mbf{j})}= 7$. Taking account of the fact that $\Gamma_{3}^{(\mbf{j})} =\Gamma_{2}^{(\mbf{j})} =\Gamma_{1}^{(\mbf{j})}$, we deduce that $\Delta_{k}^{(\mbf{j})} =\Delta_{2}^{(\mbf{j})}$ and $\rk \Delta_{k}^{(\mbf{j})} = 7 < n+m = 9$ for all $k \geq 3$, which contradicts the strong controllability condition (iii). Hence we must have $j_{3} >1$.

\paragraph{If $\underline{\mathbf{j_{2}=1,~{j_{3}= 2}}}$}

We have proven so far that, for $\mbf{j}=(0,1,2)$,
$$\begin{aligned}
\Delta_{0}^{(\mbf{j})} &= \left\{\ddu{1}{0}\right\},\\
\Delta_{1}^{(\mbf{j})} &= \left\{\ddu{1}{0}, \ddu{2}{0},  g_{1} \right\},\\
\Delta_{2}^{(\mbf{j})} &= \left\{\ddu{1}{0}, \ddu{2}{0}, \ddu{3}{0}, g_{1}, g_{2}, [g_{2},g_{1}]\right\}.
\end{aligned}$$
are all involutive if, and only if, $H'_{1,2}$ is involutive with $\rk H'_{1,2} = 3$.  Moreover, we have $\rk \Delta_{2}^{(\mbf{j})} = 6$ and we have
$$\begin{aligned}&\Gamma_{3}^{(\mbf{j})} = \Gamma_{1}^{(\mbf{j})}, \\&\Delta_{3}^{(\mbf{j})} = \left\{\ddu{1}{0}, \ddu{2}{0}, \ddu{3}{0}, g_{1}, g_{2}, g_{3}, [g_{2},g_{1}], [g_{3},g_{2}], [g_{3},[g_{3},g_{1}]], [g_{3},[g_{3},g_{2}]] \right\}.\end{aligned}$$
Thus, $\rk H'_{1,2} = 3$ implies that  $\rk \Delta_{3}^{(\mbf{j})} = 9= n+m$ if $\rk H_{2,3} = 6$. \\
Hence,  $\mbf{j} = (0,1,2)$ is such that
$$\Gamma_{k}^{(\mbf{j})} = \Gamma_{3}^{(\mbf{j})} , \quad \Delta_{k}^{(\mbf{j})} =\Delta_{3}^{(\mbf{j})} \quad \forall k\geq 3 = k_{\star}^{(\mbf{j})}$$
and $\Delta_{k}^{(\mbf{j})}$ is involutive with constant rank  and invariant by $\Gamma_{k}^{(\mbf{j})}$ for all $k$,
which proves that, if \eqref{init2:eq}  holds true, if $H'_{1,2}$ is involutive, with $\rk H' _{1,2} = 3$, and $\rk H_{2,3} = 6$, the system \eqref{3inputs} is P$^{2}$-flat with minimal prolongation $\mbf{j}=(0,1,2)$.
\end{enumerate}

Moreover, it is easily verified that
$$\rho_{0}^{(0,1,2)}= 3, \quad \rho_{1}^{(0,1,2)}= 3, \quad \rho_{2}^{(0,1,2)}= 3, \quad \rho_{3}^{(0,1,2)}= 3$$
and
$$\kappa_{1}^{(0,1,2)}= 4, \quad \kappa_{2}^{(0,1,2)}= 4, \quad \kappa_{4}^{(0,1,2)}= 3$$
which proves that the equivalent linear system is
\begin{equation}\label{linequiv2:eq}
y_{1}^{(4)}= v_{1}, \quad y_{2}^{(4)}= v_{2}, \quad y_{3}^{(3)}= v_{3}.
\end{equation}
Since there are only two possible initialization cases (up to input permutation), the theorem is proven.
\end{proof}

\end{document}